\newtheorem{theorem}{Theorem}[section]
\newtheorem{proposition}[theorem]{Proposition}
\newtheorem{corollary}[theorem]{Corollary} 
\theoremstyle{remark}
\theoremstyle{definition}
\newtheorem{example}[theorem]{Example} 
\def\r{\mathbb R}
\begin{document}
\title{Translators of the Gauss curvature flow}
\author{Muhittin Evren Aydin}
\address{Department of Mathematics, Faculty of Science. Firat University. Elazig 23200 Turkey}
\email{meaydin@firat.edu.tr}
\author{Rafael L\'opez}
\address{Departamento de Geometr\'{\i}a y Topolog\'{\i}a. Universidad de Granada. Avenida Fuentenueva, s/n. Granada 18071 Spain}
\email{rcamino@ugr.es}
\keywords{$K^\alpha$-translator, Darboux surface, surface of revolution, separation of variables.}
\subjclass{53C44, 53A15, 35J96}

\begin{abstract}
A $K^\alpha$-translator is a surface in Euclidean space $\r^3$ that moves by translations in a spatial direction and under the $K^\alpha$-flow, where $K$ is the Gauss curvature and $\alpha$ is a constant. We classify all $K^\alpha$-translators that are rotationally symmetric. In particular, we prove that for each $\alpha$ there is  a $K^\alpha$-translator intersecting orthogonally the rotation axis. We also describe all $K^\alpha$-translators invariant by a uniparametric group of helicoidal motions and the translators obtained by separation of variables.
\end{abstract}
\maketitle


\section{Introduction and results}
 
 The flow by powers of the Gauss curvature $K$ was initiated by Chow \cite{ch} after the articles of Firey and Tso (\cite{fi,ts}). These works were the starting point of the theory of the flow by the Gaussian curvature (\cite{an,ur}), a topic of high activity in geometric analysis that continues to the present. Given a smooth  immersion $X:\Sigma\to\r^3$ of a strictly convex surface $\Sigma$ in Euclidean space $\r^3$, we consider the $K^\alpha$-flow as a one-parameter family of smooth immersions $X_t=X(\cdot,t)\colon\Sigma\to \r^3$, $t\in [0,T)$ such that $X_0=X$ and satisfying the flow 
$$\frac{\partial}{\partial t}X(p,t)=-K(p,t)^\alpha N(p,t),\quad (p,t)\in \Sigma\times [0,T),$$
where $\alpha\in\r$ is a constant, $N(p,t)$ is the unit normal of $X(p,t)$ and $K(p,t)$ is the Gauss curvature at $X(p,t)$. An interesting problem is the evolution of a surface through the flow. As an example for the reader, if $\alpha=1$ the surface becomes spherical (\cite{an3}). 

If the surface moves under the $K^\alpha$-flow along a spatial direction $\vec{v}\in \r^3$, then $X_0$ satisfies $K^\alpha=\lambda\langle N,\vec{v}\rangle$ for some positive constant $\lambda$. The vector $\vec{v}$ is assumed to be unitary and it is called the {\it speed} of the flow. After a dilation,  we can assume that  $\lambda=1$. A surface $\Sigma$ is called  a {\it translator} by the $K^\alpha$-flow with speed $\vec{v}$, or simply, a {\it $K^\alpha$-translator} in case that $\vec{v}$ is understood,  if
\begin{equation}\label{k1}
K^\alpha= \langle N,\vec{v}\rangle.
\end{equation}
The notion of translator   by positive powers of the Gauss curvature  appeared in \cite{ur2}. See also \cite{cdl,le}.

In this paper we want to find examples of $K^\alpha$-translators   under the geometric condition that the surface is defined kinematically as the movement of a curve by a uniparametric family of rigid motions of $\r^3$. Following Darboux   \cite[Livre I]{da}, we consider surfaces parametrized by   $\Psi(s,t)=A(t)\cdot \gamma(s)+\beta(t)$, where $\gamma$ and $\beta$ are two spatial  curves and $A(t)$ is an orthogonal matrix. The cases that we are interesting are: rotational surfaces ($A(t)$ is a uniparametric group of rotations and $\beta$ is constant), helicoidal surfaces ($A(t)$ is a uniparametric group of helicoidal motions), translation surfaces ($A(t)$ is the identity) and  ruled surfaces ($\gamma$ is a straight-line). Since we have left to one side the study of the evolution of the surface,  we do not require that $K$ to be positive but only that $K^\alpha$ has sense. For example, $K$ may be negative if $\alpha\in\mathbb{Z}$. This will be implicitly assumed in all the  results. 

In the flow by powers of the Gaussian curvature, it is known that the $K^{1/4}$-flow is special because it has an interpretation in affine differential geometry as the affine normal flow of a convex surface (\cite{an2,ca}). However, in base of our proofs, this case will appear as natural because when we express the equation \eqref{k1} for each one of the above three types of surfaces and  when $\alpha$ is exactly $1/4$, this   equation is greatly simplified due to a cancellation of terms. This makes the arguments easier than in the general case of $\alpha$.

Planes are examples of $K^\alpha$-translators for any $\alpha>0$ provided the plane is parallel to the speed of the flow. Throughout this paper, we will discard planes as examples of $K^\alpha$-translators. Other examples that we will not consider in the class of $K^\alpha$-translators are the surfaces whose Gauss curvature is constant, where now the equation \eqref{k1} says that the unit normal vector $N$ makes a constant angle with a fixed direction. Similarly, we discard the case $\alpha=0$.

It is important to point out that there is not a priori relation between the speed vector $\vec{v}$ and the special parametrization of each of the above Darboux surfaces. So, if one considers the study of translators one can prescribed the speed $\vec{v}$, usually, the vector $(0,0,1)$ in the literature. However, if one considers one of the above types of Darboux surfaces, the parametrization has no relation with $\vec{v}$. This can be  clearly seen for rotational surfaces (also helicoidal surfaces). The rotation axis of the surface is, initially, independent of the vector $\vec{v}$. However, as we will see, for rotational and helicoidal surfaces, the  speed must be parallel to the axis (Propositions  \ref{p-arot} and \ref{pr-ahel}). Something similar occurs for translations surfaces where, if one prescribes that the surface is $z=f(x)+g(y)$, the speed may be arbitrary.

The organization of this paper is as follows. In Section \ref{sec2}, we obtain all rotational $K^\alpha$-translators. Recall that in \cite{ur2}, Urbas obtains these surfaces for $\alpha\in (0,1/2]$ using the Legendre transform (see also \cite{ju} for a similar calculation). Our approach uses simple geometric arguments and it holds for any $\alpha$ (Theorem \ref{t-1}). In particular, for each value of $\alpha$, we prove the existence of rotational examples intersecting orthogonally the rotation axis (Corollary \ref{co-1}). Section \ref{sec22} is devoted to the study of $K^\alpha$-translators of helicoidal type. Although we are not able to obtain explicit parametrizations of these surfaces, we do a first integration of the generating curve (Theorem \ref{t-hel}), also in terms of the Bour function (Theorem \ref{t-hel2}). In Section \ref{sec3}, we obtain the classification of all solutions of \eqref{k1} obtained by separation of variables $z=f(x)+g(y)$, where $(x,y,z)$ are the canonical coordinate system of $\r^3$. Although these solutions depend on a particular choice of coordinates of $\r^3$, our result holds for any speed vector $\vec{v}$. In particular, we prove that there are $K^\alpha$-translators only if $\alpha=1/4$ (Theorems \ref{t-31} and \ref{t-32}). Besides we provide new examples of $K^{1/4}$-translators of type $z=f(x)+g(y)$, we also give new examples of $K^{1/4}$-translators obtained by separation of variables of type $z=f(x)g(y)$ (Example \ref{ex-homo}). Finally, in Section \ref{sec4}, we investigate the existence of ruled $K^\alpha$-translators, proving that there are not ruled $K^\alpha$-translators, except trivial cases (Theorem \ref{t-4}).

The authors have extended the results of this paper for (spacelike and timelike) $K^\alpha$-translators in Lorentz-Minkowski space (\cite{alo}).

\section{Rotational $K^\alpha$-translators}\label{sec2}

In this section, we classify  the surfaces of
revolution satisfying \eqref{k1}. A first question is if there is a relation
between the rotation axis of the surface and the speed vector $\vec{v}$. As it is expectable, we prove that  the vector $\vec{v}$ must be parallel to the rotation axis.

\begin{proposition}\label{p-arot}
Let $\Sigma$ be a $K^\alpha$-translator. If $\Sigma$ is a surface
of revolution, then  the rotation axis is parallel to the speed vector $\vec{v}$.
\end{proposition}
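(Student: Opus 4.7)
The plan is to leverage the rotational symmetry of $\Sigma$ to show that the speed vector $\vec{v}$ must commute with the group of rotations about the axis. Denote this axis by $L$ and the one-parameter group of rotations by $\{R_\theta\}$. Each $R_\theta$ is an orientation-preserving isometry of $\r^3$ that leaves $\Sigma$ invariant, so it preserves the Gauss curvature and acts equivariantly on a consistently chosen unit normal: $K(R_\theta(p))=K(p)$ and $N(R_\theta(p))=R_\theta N(p)$ for all $p\in\Sigma$ and all $\theta$.

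The first step is to evaluate \eqref{k1} at both $p$ and $R_\theta(p)$ and to use the above invariances; this gives $\langle R_\theta N(p),\vec{v}\rangle=\langle N(p),\vec{v}\rangle$, or equivalently
\begin{equation*}
\langle N(p),\, R_{-\theta}\vec{v}-\vec{v}\rangle = 0
\end{equation*}
for every $p\in\Sigma$ and every $\theta$. Next, I would decompose $\vec{v}=\vec{v}_{\parallel}+\vec{v}_{\perp}$ into its components parallel and perpendicular to $L$, and observe that the vector $w_\theta:=R_{-\theta}\vec{v}-\vec{v}=R_{-\theta}\vec{v}_{\perp}-\vec{v}_{\perp}$ is automatically orthogonal to $L$ for every $\theta$.

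The final step is to conclude that $w_\theta=0$ for every $\theta$, which is equivalent to $\vec{v}_{\perp}=0$. Since $\Sigma$ is not a plane (planes are excluded as $K^\alpha$-translators), there exists $p_0\in\Sigma$ such that $N(p_0)$ is not parallel to $L$; otherwise $N$ is a constant vector field and $\Sigma$ is a plane. Rotating $p_0$ by all $\psi$, the family $\{R_\psi N(p_0)\}$ traces a circle whose horizontal projections span the full plane orthogonal to $L$, so the identity $\langle R_\psi N(p_0),w_\theta\rangle=0$ forces $w_\theta$ to have zero component in that plane. Combined with the fact that $w_\theta$ already lies in the plane orthogonal to $L$, we obtain $w_\theta=0$, and therefore $\vec{v}=\vec{v}_{\parallel}$ is parallel to $L$. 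The main obstacle is this last geometric step, but it reduces to the elementary verification that a non-planar rotational surface admits at least one point whose normal has a nonzero radial component, together with the observation that rotating this single normal sweeps out enough directions to pin down $w_\theta$.
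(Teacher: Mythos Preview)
Your argument is correct and takes a genuinely different, more conceptual route than the paper. The paper proceeds by explicit computation: it places the axis along the $z$-axis, parametrizes $\Sigma$ as $(r\cos\theta,r\sin\theta,f(r))$, expands \eqref{k1} as $A_0+A_1\cos\theta+A_2\sin\theta=0$, and invokes the linear independence of $\{1,\cos\theta,\sin\theta\}$ to force $v_1f'=v_2f'=0$; since $f'\not\equiv 0$ (else $\Sigma$ is a plane), it concludes $v_1=v_2=0$. The circular cylinder is treated separately. Your approach instead exploits the equivariance $N(R_\theta p)=R_\theta N(p)$ and the invariance $K(R_\theta p)=K(p)$ directly, reducing the question to the orthogonality relation $\langle N(p),R_{-\theta}\vec v-\vec v\rangle=0$ and then a short linear-algebra argument in the plane $L^\perp$.

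What each approach buys: your symmetry argument is coordinate-free, handles the cylinder uniformly (its normals are already in $L^\perp$, so the spanning step goes through), and transports verbatim to the helicoidal case and other ambient spaces. The paper's parametric computation, while more hands-on, has the practical advantage that the vanishing of $A_0$ is exactly the ODE for $f$ that drives the subsequent classification (Theorem~\ref{t-1}); so the proof and the next step share the same set-up. One small remark on your write-up: the equivariance $N(R_\theta p)=R_\theta N(p)$ uses that a global choice of unit normal can be made compatibly with the rotation action; this is immediate for a connected surface of revolution, but worth a word.
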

\begin{proof}
After a rigid motion, we can assume that  the
rotation axis $L$ is the $z$-axis. The generating curve of a rotational surface is a  curve included in the coordinate $xz$-plane which can be assumed that it is a graph on the $x$-axis, except in the case that this curve is a straight-line parallel to the $z$-axis. In this particular situation, the surface is a circular cylinder where we know that $K=0$. Thus a circular cylinder satisfies \eqref{k1} if $\langle N,\vec{v}\rangle=0$, that is, the vector $v$ is parallel to the $z$-axis, proving the result for this particular case.  

Suppose now the general case that the generating curve writes as $z=f(x)$ where $f\colon I\subset\r^+\to\r$ is a smooth function. Then a parametrization of $\Sigma$ is  
\begin{equation}\label{p-rot}
X(r,\theta )=(r\cos \theta ,r\sin \theta ,f(r)),
\end{equation}
where  $\theta \in \mathbb{R}$.   The unit
normal vector $N$ is 
\begin{equation*}
N(r,\theta)= \frac{1}{\sqrt{1+f'^2}}(-f'\cos \theta
,-f'\sin \theta ,1)
\end{equation*}
 and the principal curvatures of $\Sigma$ are 
\begin{equation*}
\kappa_1=\frac{f''}{(1+f'^2)^{3/2}},\quad \kappa_2=%
\frac{f'}{r(1+f'^2)^{1/2}}.
\end{equation*}
If $\vec{v}=(v_1,v_2,v_3)$, then \eqref{k1} writes as 
\begin{equation*}
K^\alpha=(\kappa_1\kappa_2)^\alpha=\left(\frac{f'f''}{r(1+f'^2)^2}%
\right)^\alpha=\frac{1}{\sqrt{1+f'^2}}(-v_1f'\cos \theta -v_2f'\sin \theta +v_3),
\end{equation*}%
or equivalently%
\begin{equation*}
A_0 +A_1 \cos \theta +A_2 \sin \theta =0,
\end{equation*}%
where 
\begin{equation*}
\begin{split}
A_0 &=\left(\frac{f'f''}{r(1+f'^2)^2}%
\right)^\alpha -v_3(1+f'^2)^{-1/2},\\
 A_1 &=v_1f'(1+f'^2)^{-1/2},\quad A_2 =v_2f'(1+f'^2)^{-1/2}.
 \end{split}
\end{equation*}%
Since the functions $\{ 1,\cos \theta ,\sin \theta \} $ are linearly independent,
we  conclude $A_0 =A_1 =A_2 =0$.  If $f'$ is constantly $0$, then $f$ is constant and the surface is a plane. This case was initially discarded. Therefore  from $A_1 =A_2 =0$, we find that $v_1=v_2=0$, concluding that $\vec{v}$ is parallel to 
$L$.
\end{proof}

After a rigid motion, we will assume that the rotation axis is the $z$-axis and consequently from this proposition, that $\vec{v}=(0,0,1)$ after a symmetry about the $xy$-plane if necessary. Then %
\eqref{k1} is 
\begin{equation*}
\left( \frac{f'f''}{r(1+f'^2)^2}\right)
^{\alpha }=\frac{1}{(1+f'^2)^{1/2}}.
\end{equation*}%
Set $g=f'/(1+f'^2)^{1/2}$. Notice that $\kappa_1=g'$ and $r\kappa_2=g$. In terms of $g$, the above equation becomes    
\begin{equation*}
\frac{g}{(1-g^{2})^{\frac{1}{2\alpha }}}g'=r.
\end{equation*}%
A first  integration gives 
\begin{equation}\label{domain}
g^2=\left\{ 
\begin{array}{lll}
1-me^{-r^2},m>0, &  &\alpha =\frac{1}{2} \\ 
1-\left( m-\frac{2\alpha-1 }{2\alpha }r^{2}\right) ^{\frac{2\alpha }{2\alpha
-1}},m\in \mathbb{R}, & &\alpha \neq \frac{1}{2}.%
\end{array}%
\right.
\end{equation}
In the case $\alpha=1/2$, the condition $g*2\geq 0$ says that the domain of $r$ is when $r^2\geq \log(m)$. Otherwise, we need to distinguish if $2\alpha/(2\alpha-1)$ is negative or positive, that is, $\alpha$ belongs to $(0,1/2)$ or not. If $\alpha\in (0,1/2)$, the parenthesis in \eqref{domain} must be positive, yielding $r^2>2\alpha/(2\alpha-1)m$. On the other hand, using now that $g^2\geq 0$, we have $r^2\geq 2\alpha/(2\alpha-1)(m-1)\geq 0$, so this is the restriction on $r$ because $2\alpha/(2\alpha-1)<0$. If $\alpha\not\int [0,1/2]$, then $2\alpha/(2\alpha-1)>0$. Since the parenthesis in \eqref{domain} must be positive because $g^2<1$, then we obtain $r^2<2\alpha/(2\alpha-1)m$ and from the fact that $g^2\geq 0$, the restriction is $r^2\geq 2\alpha/(2\alpha-1)(m-1)$.  To summarize, we have 
\begin{equation}\label{c-rot}
\left\{\begin{array}{lll}
\frac{2\alpha}{2\alpha-1}(m-1)\leq r^2,& &\alpha\in (0,\frac12)\\
\frac{2\alpha}{2\alpha-1}(m-1)\leq r^2<\frac{2\alpha}{2\alpha-1}m,&& \alpha\not\in [0,\frac12].
\end{array}\right.
\end{equation}
Hence we deduce
\begin{equation}\label{d-rot}
f'=\left\{ 
\begin{array}{ll}
\pm\left(\frac{1}{m}e^{r^2}-1\right)^{1/2},  & \alpha =\frac{1}{2} \\ 
\pm\left(\left(m-\frac{2 \alpha -1}{2 \alpha }r^2\right)^{\frac{2 \alpha }{1-2 \alpha }}-1\right)^{1/2},& \alpha \neq \frac{1}{2}.%
\end{array}%
\right.
\end{equation}%
As conclusion, we have the classification of all $K^\alpha$-translators  that also are surfaces of revolution.

\begin{theorem}\label{t-1} Let $\Sigma$ be a $K^\alpha$-translator. If $\Sigma$ is a surface of revolution about the $z$-axis, then $\Sigma$ is a circular cylinder  of arbitrary radius or $\Sigma$ parametrizes as \eqref{p-rot} where  
\begin{equation}\label{p-rot2}
f(r) =\left\{ 
\begin{array}{lll}
\pm \int^{r}\left( \frac{1}{m}e^{t^{2}}-1\right) ^{1/2}\, dt, m>0,& &\alpha =\frac{1%
}{2} \\ 
\pm \int^{r}\left( \left( m-\frac{2\alpha-1 }{2\alpha }t^2\right) ^{\frac{%
2\alpha }{1-2\alpha }}-1\right) ^{1/2}\, dt,m\in \mathbb{R}, & &\alpha \neq \frac{%
1}{2}.%
\end{array}%
\right.
\end{equation}
Furthermore,   the maximal  domain of the  function $f(r)$ is  
\begin{enumerate}
\item $[\sqrt{\log{m}},\infty)$, if   $\alpha=1/2$.
\item $[\sqrt{\frac{2\alpha}{2\alpha-1}m},\infty)$, if   $\alpha\in (0,1/2)$.
\item  $[\sqrt{\frac{2\alpha}{2\alpha-1}(m-1)},\sqrt{\frac{2\alpha}{2\alpha-1}m})$, if   $\alpha\not\in [0,1/2]$. In this case,  we have 
$$\lim_{r\to \sqrt{\frac{2\alpha}{2\alpha-1}(m-1)}}f'(r)=0,\quad \lim_{r\to \sqrt{\frac{2\alpha}{2\alpha-1}m}}f(r)=\infty.$$
\end{enumerate}
In all these cases, we understand that if in the radicand in the left-end of the interval is negative, then the value of  this end is $0$.
\end{theorem}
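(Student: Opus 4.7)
The plan is to reduce \eqref{k1} to an ODE on the generating curve and integrate it explicitly. By Proposition~\ref{p-arot}, one may assume that the rotation axis is the $z$-axis and $\vec{v}=(0,0,1)$. The generating curve in the $xz$-plane is either a vertical line, which yields a circular cylinder with $K\equiv 0$ and $N$ horizontal so that \eqref{k1} reduces to $0=0$ for every radius, or a graph $z=f(r)$, in which case the surface admits the parametrization \eqref{p-rot}.

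In the graph case, I would substitute \eqref{p-rot} into \eqref{k1}. Using $\kappa_1=f''/(1+f'^2)^{3/2}$ and $\kappa_2=f'/(r\sqrt{1+f'^2})$, the translator equation becomes
\begin{equation*}
\left(\frac{f'f''}{r(1+f'^2)^2}\right)^{\alpha}=\frac{1}{\sqrt{1+f'^2}}.
\end{equation*}
The crucial simplification is the change of variable $g=f'/\sqrt{1+f'^2}$, which turns this into the separable equation $g\,(1-g^2)^{-1/(2\alpha)}g'=r$. A direct antiderivative yields a first integral with integration constant $m$, naturally splitting into the exponential form $g^2=1-me^{-r^2}$ when $\alpha=1/2$ and the power form $g^2=1-\bigl(m-\tfrac{2\alpha-1}{2\alpha}r^2\bigr)^{2\alpha/(2\alpha-1)}$ otherwise. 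Solving for $f'$ and integrating once more produces the formulas in \eqref{p-rot2}.

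The remaining task is the description of the maximal domain of $r$. Two positivity constraints are in play: $0\le g^2<1$ so that $f'$ is real and finite, together with positivity of the base $m-\tfrac{2\alpha-1}{2\alpha}r^2$ whenever it is raised to a non-integer exponent. I would split on the sign of $\tfrac{2\alpha}{2\alpha-1}$. For $\alpha\in(0,1/2)$ this ratio is negative, the base constraint is automatic, and only $g^2\ge 0$ is active, giving a half-infinite interval with left endpoint $\sqrt{\tfrac{2\alpha}{2\alpha-1}(m-1)}$. For $\alpha\notin[0,1/2]$ the ratio is positive, both constraints bite, and the interval is finite. At its left endpoint $g^2=0$ forces $f'\to 0$; at the right endpoint the base tends to zero raised to the negative power $2\alpha/(1-2\alpha)$, so $g^2\to 1$, $f'\to\infty$, and a short estimate on the integrand in \eqref{p-rot2} shows $f\to\infty$. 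The case $\alpha=1/2$ is treated identically using $g^2\ge 0$, which gives $r^2\ge\log m$.

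The main technical obstacle is not any individual computation, which is essentially separation of variables, but the careful bookkeeping of the signs of $2\alpha$ and $2\alpha-1$ across the three regimes of the statement; once the auxiliary variable $g$ is introduced and this sign analysis is organized, the explicit integration and the domain description fall out with no further ideas required.
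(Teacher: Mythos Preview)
Your proposal is correct and follows essentially the same route as the paper: after invoking Proposition~\ref{p-arot} you write \eqref{k1} as the profile ODE, introduce the same auxiliary function $g=f'/\sqrt{1+f'^2}$, integrate the resulting separable equation to obtain the first integral \eqref{domain}, and then carry out the same sign analysis on $2\alpha/(2\alpha-1)$ to read off the maximal domain and the endpoint behaviour. The only cosmetic difference is that in the paper most of this computation is done in the text \emph{preceding} the theorem, so that the formal proof block handles only the two limits in item~(3); your write-up packages all of it together, but the mathematics is identical.
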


\begin{proof} It remains to prove the limits in the case (3). The first limit is consequence of \eqref{c-rot} and \eqref{d-rot}. For the second limit, note that the maximal domain of $f$ is a bounded interval and that $\lim_{r\to \sqrt{\frac{2\alpha}{2\alpha-1}m}}f(r)=\infty$ by \eqref{d-rot}.
\end{proof}

We point that it is expectable that in the case $\alpha\not\in [0,1/2]$ the domain cannot be $[0,\infty)$ because there are no entire graphs that are $K^\alpha$-translators if $\alpha>1/2$ (\cite[Sect. 4]{ur2}) and if $\alpha<0$ (\cite[Th. 6.1]{ur3}).   It deserves to note the case $\alpha=1/4$ because it is possible to integrate explicitly \eqref{p-rot2} (see also \cite{le}). See Figure \ref{fig14}.

\begin{corollary} Rotational  $K^{1/4}$-translators   form a uniparametric family of surfaces  parametrized by \eqref{p-rot}, where  
$$f(r)=\frac{1}{2} \left(r \sqrt{m+r^2-1}+(m-1) \log \left(\sqrt{m+r^2-1}+r\right)\right)+c,\quad m,c\in\r.$$
The maximal domain of $f$ is $[\sqrt{1-m},\infty)$ if $m<1$ and $[0,\infty)$ if $m\geq 1$.   For the value $m=1$, $f$ is the parabola $f(r)=r^2/2$, the graphic of $f(r)$ intersects orthogonally the rotation axis and the surface is a paraboloid.
\end{corollary}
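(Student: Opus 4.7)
The plan is to specialize Theorem~\ref{t-1} to $\alpha=1/4$, carry out the resulting integral explicitly, and read off the distinguished case $m=1$. At $\alpha=1/4$ the two rational exponents appearing in~\eqref{p-rot2} simplify dramatically: $\frac{2\alpha-1}{2\alpha}=-1$ and $\frac{2\alpha}{1-2\alpha}=1$. Substituting, the formula for $f$ reduces to
$$f(r)=\pm\int^{r}\sqrt{t^{2}+m-1}\,dt.$$

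Next I apply the standard antiderivative
$$\int\sqrt{t^{2}+a}\,dt=\frac{t}{2}\sqrt{t^{2}+a}+\frac{a}{2}\log\!\left(t+\sqrt{t^{2}+a}\right)+C,$$
which is verified by direct differentiation, with $a=m-1$. This yields the claimed closed form: the free parameter $c\in\r$ absorbs the constant of integration, and the sign $\pm$ corresponds to a reflection of the profile curve across the $r$-axis, which I absorb into the choice of orientation of the surface.

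For the maximal domain I use the inequality~\eqref{c-rot}: since $1/4\in(0,1/2)$, the restriction is $r^{2}\geq\frac{2\alpha}{2\alpha-1}(m-1)=1-m$. When $m<1$ the lower endpoint is $\sqrt{1-m}$; when $m\geq 1$ the radicand is non-positive and the convention stated at the end of Theorem~\ref{t-1} forces the endpoint to be $0$, giving the domain $[0,\infty)$.

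Finally, for $m=1$ the coefficient $m-1$ in front of the logarithm vanishes, so $f$ reduces to the polynomial $r^{2}/2+c$, a parabola. The corresponding surface of revolution about the $z$-axis is a paraboloid, and since $f'(0)=0$ the generating curve meets the rotation axis with horizontal tangent, i.e.\ orthogonally. The whole argument is essentially mechanical once the exponents in~\eqref{p-rot2} are simplified, so no serious obstacle arises; the only small point to watch is that the logarithmic branch is silently killed by its coefficient when $m=1$, which is exactly what makes the rotationally-symmetric paraboloid appear as a distinguished member of the family.
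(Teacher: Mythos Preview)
Your proof is correct and follows exactly the approach the paper intends: the corollary is stated in the paper without proof, as an immediate specialization of Theorem~\ref{t-1} and the integral formula~\eqref{p-rot2}, and you have carried out that specialization accurately, including the simplification of the exponents, the standard antiderivative, the domain computation from~\eqref{c-rot}, and the identification of the paraboloid at $m=1$.
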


 \begin{figure}[hbtp]
\begin{center}
\includegraphics[width=.7\textwidth]{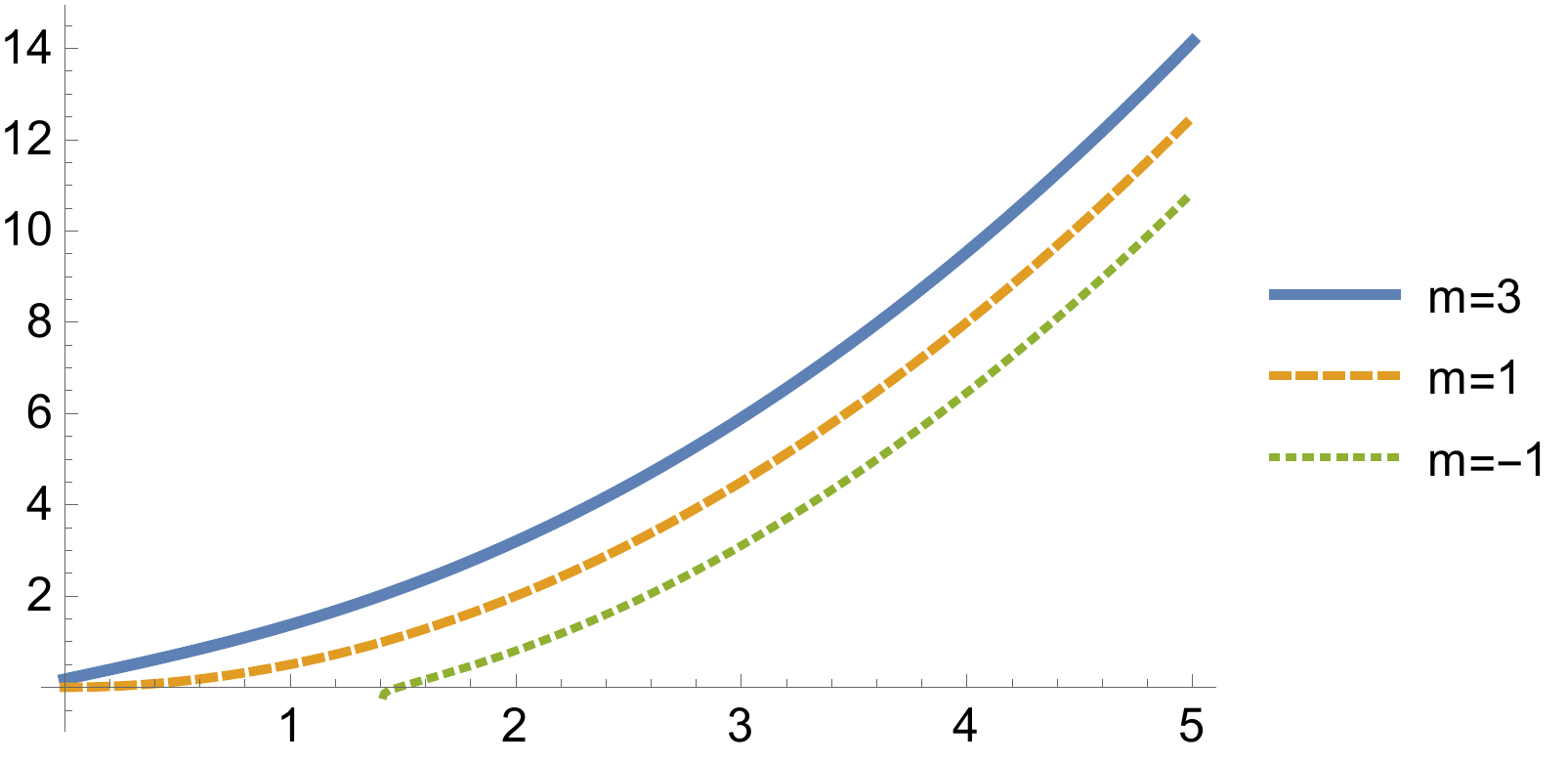}
\end{center}
\caption{Generating curves of rotational $K^{1/4}$-translators for different values of the parameter $m$.}\label{fig14}
\end{figure}

By \eqref{c-rot}, we point out that  the maximal domain of $f$ is not $[0,\infty)$ in general. However, an interesting case to investigate is if there are generating curves that meet orthogonally the rotation axis. We prove that this occurs for all cases of $\alpha$. See Figure \ref{fig2}.

\begin{corollary}\label{co-1}
For each $\alpha$, there are rotational $K^\alpha$-translators whose generating curves intersect orthogonally the rotation axis.  These surfaces are unique up to vertical translations. Furthermore, 
\begin{enumerate}
\item If $\alpha\in (0,\frac12]$, the maximal domain of $f$ is $[0,\infty)$, $\lim_{r\to\infty}f(r)=\infty$ and 
$$f(r)=(1-2\alpha)\left(\frac{1-2\alpha}{2\alpha}\right)^{\frac{\alpha}{1-2\alpha}}r^{\frac{1}{1-2\alpha}}+o(r^{\frac{1}{1-2\alpha}}).$$
\item If $\alpha\not\in [0,\frac12]$, the maximal domain is $[0,\sqrt{\frac{2\alpha}{2\alpha-1}})$, with 
$$\lim_{r\to\sqrt{\frac{2\alpha}{2\alpha-1}}}f(r)=\infty.$$
\end{enumerate}
\end{corollary}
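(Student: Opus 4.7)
The plan is to read the corollary off Theorem \ref{t-1} by imposing the orthogonality condition, which amounts to $f'(0) = 0$ together with $0$ lying in the maximal domain of $f$. I would set $r = 0$ in \eqref{d-rot}, obtaining $f'(0)^{2} = 1/m - 1$ when $\alpha = 1/2$ and $f'(0)^{2} = m^{2\alpha/(1-2\alpha)} - 1$ otherwise; in both regimes $f'(0) = 0$ forces $m = 1$ uniquely (with $m > 0$ guaranteed by the range conditions on $m$ from the previous analysis). Substituting $m = 1$ into the domain intervals of Theorem \ref{t-1} immediately yields $[0,\infty)$ for $\alpha \in (0,1/2]$ (since $\log 1 = 0$ and $\frac{2\alpha}{2\alpha-1}(m-1) = 0$) and $[0, \sqrt{2\alpha/(2\alpha-1)})$ for $\alpha \notin [0,1/2]$. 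Uniqueness up to vertical translation follows because $m = 1$ is forced and the only remaining freedom in \eqref{p-rot2} is the additive constant of integration; the $\pm$ sign merely corresponds to the reflection across the $xy$-plane already normalized away in the paragraph preceding \eqref{d-rot}.

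For the asymptotics in case (1) with $\alpha \in (0,1/2)$, the exponent $\frac{4\alpha}{1-2\alpha}$ is positive, so
$$\bigl(1 + \tfrac{1-2\alpha}{2\alpha}r^{2}\bigr)^{\frac{2\alpha}{1-2\alpha}} \sim \bigl(\tfrac{1-2\alpha}{2\alpha}\bigr)^{\frac{2\alpha}{1-2\alpha}} r^{\frac{4\alpha}{1-2\alpha}}$$
as $r \to \infty$, and the subtraction of $1$ is of lower order. Taking the square root gives $f'(r) \sim \bigl(\tfrac{1-2\alpha}{2\alpha}\bigr)^{\alpha/(1-2\alpha)} r^{2\alpha/(1-2\alpha)}$; integration together with the identity $\frac{2\alpha}{1-2\alpha} + 1 = \frac{1}{1-2\alpha}$ delivers the stated expansion with remainder $o(r^{1/(1-2\alpha)})$ and, a fortiori, $f(r) \to \infty$. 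For $\alpha = 1/2$ the same argument is replaced by the observation that $|f'(r)| \sim e^{r^{2}/2}$ is manifestly non-integrable at infinity.

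For case (2), at $r^{\ast} := \sqrt{2\alpha/(2\alpha-1)}$ the base $u(r) := 1 - \frac{2\alpha-1}{2\alpha}r^{2}$ vanishes while its exponent $\frac{2\alpha}{1-2\alpha}$ is negative, so $u^{2\alpha/(1-2\alpha)} \to \infty$ and hence $f'(r) \to \infty$. To upgrade this to $\lim_{r \to r^{\ast}} f(r) = \infty$, I would use the linear expansion $u(r) \sim |u'(r^{\ast})|(r^{\ast} - r)$ near $r^{\ast}$ to get $f'(r) \sim c(r^{\ast} - r)^{\alpha/(1-2\alpha)}$ and then read off non-integrability of $f'$ on $[0, r^{\ast})$ from the resulting exponent. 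The main obstacle I anticipate is the exponent- and sign-bookkeeping across the two regimes: keeping the signs of $2\alpha - 1$ and $1 - 2\alpha$ straight is what makes the asymptotic coefficient $(1-2\alpha)\bigl(\frac{1-2\alpha}{2\alpha}\bigr)^{\alpha/(1-2\alpha)}$ come out correctly in case (1) and the divergence of the improper integral near $r^{\ast}$ verifiable in case (2).
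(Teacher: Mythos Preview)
Your approach matches the paper's: force $m=1$ from $f'(0)=0$ via \eqref{d-rot}, read the domains off Theorem~\ref{t-1}, and extract the asymptotic in case~(1). The paper frames this last step as L'H\^opital on $f(r)/r^{1/(1-2\alpha)}$, which is precisely the tool that justifies your ``integrate the leading term of $f'$'' argument, so the two routes are equivalent. One step you omit is the paper's check that the resulting surface is actually smooth across the axis: from \eqref{d-rot} one computes $\lim_{r\to 0}f''(r)=1$, giving $C^2$ regularity at $r=0$, and then $C^\infty$ by elliptic regularity. Without this, $f'(0)=0$ alone does not guarantee that the rotated surface is a smooth $K^\alpha$-translator through the axis.

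For case~(2) your plan contains a genuine gap. The exponent in your expansion $f'(r)\sim c(r^{\ast}-r)^{\alpha/(1-2\alpha)}$ lies strictly in $(-1,0)$ whenever $\alpha<0$ or $\alpha>1$ (for instance $\alpha=2$ gives exponent $-2/3$, and $\alpha=-1$ gives $-1/3$), so $f'$ is \emph{integrable} near $r^{\ast}$ and $f$ remains bounded there even though $f'\to\infty$. Non-integrability holds only for $\alpha\in(1/2,1]$. The paper does not carry out this computation either---it simply cites Theorem~\ref{t-1}(3), whose proof in turn appeals to \eqref{d-rot} (an expression for $f'$, not $f$)---so your more careful route is actually the one that exposes the difficulty in establishing the stated limit $\lim_{r\to r^{\ast}}f(r)=\infty$ for all $\alpha\notin[0,1/2]$.
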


\begin{proof} 
The condition on the orthogonality with the rotation axis  requires  that $f$ is defined at $r=0$ and $f'(0)=0$. From \eqref{d-rot}, it is immediate that $m$ must be $1$ and the same occurs in the particular case  $\alpha=1/2$. This solution is $C^2$ at $r=0$ because from \eqref{d-rot}, we have $\lim_{r\to 0}f''(r)=1$. Hence  $f$ is $C^\infty$ in its domain by regularity (\cite{caf,gt}). The uniqueness is consequence of the solvability of \eqref{p-rot2}.

The behaviour of $f$ at infinity is consequence of \eqref{d-rot} and the L'H\^{o}pital rule. Indeed, if $\delta=1/(1-2\alpha)$, then 
$$\lim_{r\to\infty}\frac{f(r)}{r^\delta}=\lim_{r\to\infty}\frac{f'(r)}{\delta r^{\delta-1}}=\frac{1}{\delta} \left(\frac{1-2\alpha}{2\alpha}\right)^{\frac{\alpha}{1-2\alpha}}.$$
\end{proof}

 \begin{figure}[hbtp]
\begin{center}
\includegraphics[width=.34\textwidth]{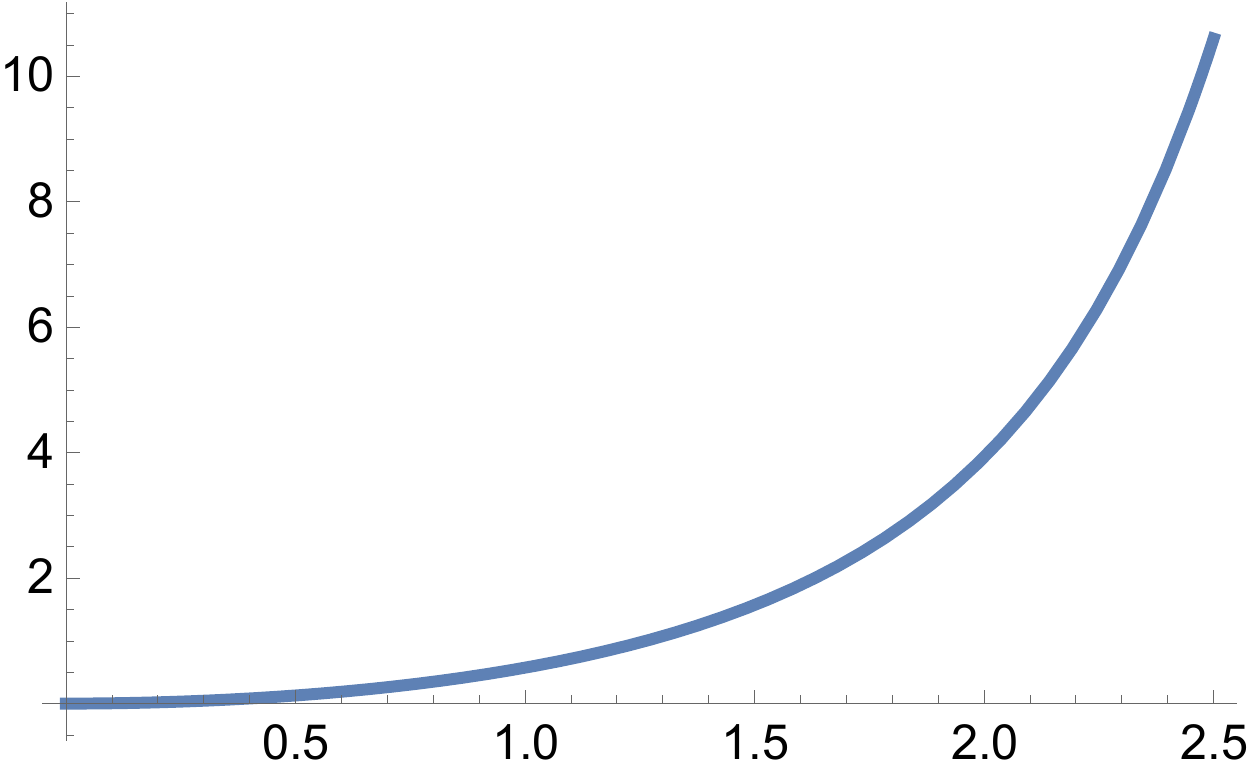}\qquad \includegraphics[width=.34\textwidth]{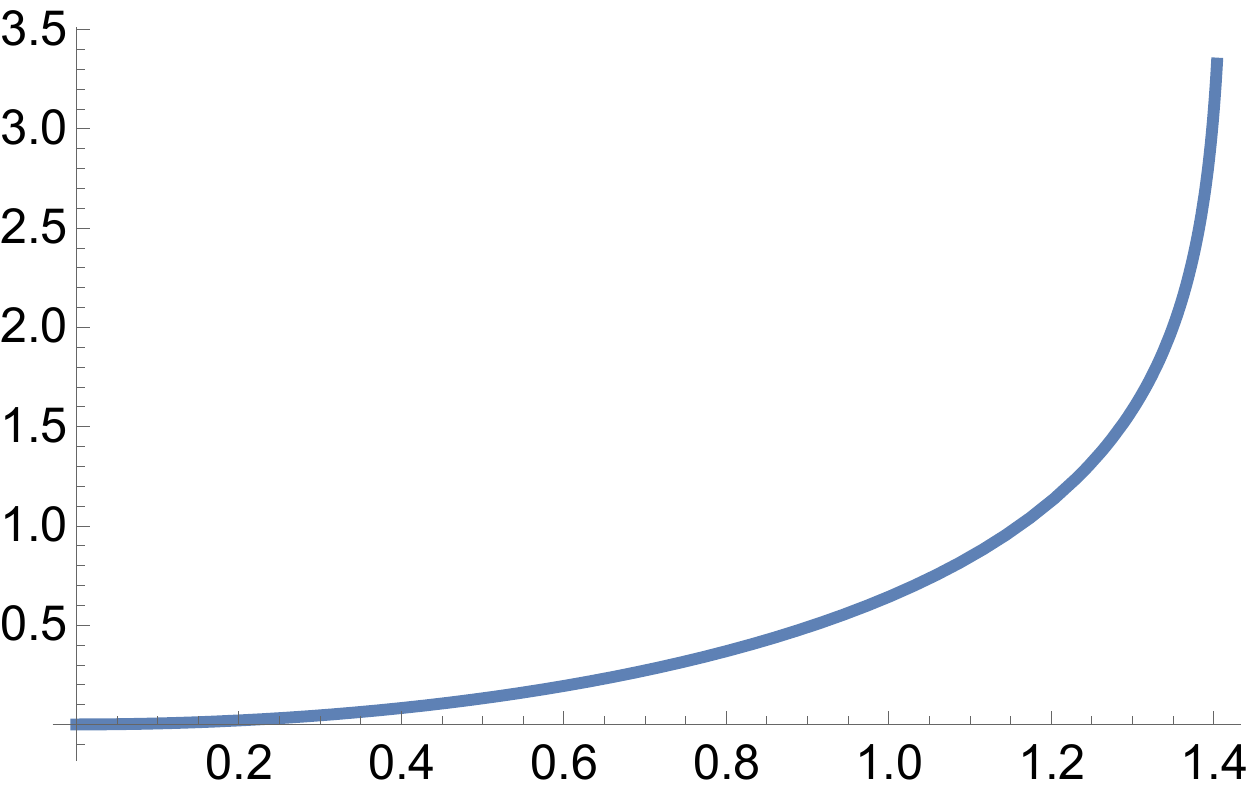}
\end{center}
\caption{Generating curves of rotational $K^\alpha$-translators intersecting orthogonally the rotation axis. Left: $\alpha=1/2$ and the maximal domain is $[0,\infty)$. Right:   $\alpha=1$ and the maximal domain is $[0,\sqrt{2})$.}\label{fig2}
\end{figure}

We point out that in some particular cases, the integrals  in \eqref{p-rot2} can   be explicitly solved.  Here, we denote by $f_\alpha$ to emphasize the parameter $\alpha$ where we also assume  $m=1$. 
\begin{enumerate}
\item Case $\alpha =1$. Then  
$$f^{\prime }(r)=\frac{r\sqrt{4-r^{2}}}{2-r^{2}}$$
and%
\begin{equation*}
f_1(r)=\mp \sqrt{4-r^{2}}\pm \sqrt{2}\tanh ^{-1}\left( \frac{1}{\sqrt{2}}\sqrt{4-r^{2}%
}\right),
\end{equation*}%
defined on $[0,2).$

\item Case $\alpha =1/3$. Now  \begin{equation*}
f^{\prime }(r)=\pm \frac{1}{2}r\sqrt{4+r^{2}}
\end{equation*}%
and the solution is
\begin{equation*}
f_{1/3}(r)=\pm \frac{1}{6}\left( 4+r^{2}\right) ^{3/2},
\end{equation*}%
defined on $[0,\infty ).$

\item   For $\alpha=1/6$, we have
$$f'(r)=\pm (\sqrt{2r^2+1}-1)^{1/2},$$
and
$$f_{1/6}(r)=\pm\frac{\sqrt{\sqrt{2 r^2+1}-1} \left(2 r^2-\sqrt{2 r^2+1}-1\right)}{3 r},$$
and defined on $[0,\infty)$.
\end{enumerate}

\section{Helicoidal $K^\alpha$-translators}\label{sec22}

Consider a helicoidal surface $\Sigma$ in $\r^3$ with axis $z$  whose generating curve $\gamma$ is included in the $xz$-plane and pitch $h$. Without loss of generality, we can assume that $\gamma(r)=(r,0,f(r))$ where $f\colon I\subset\r^+\to\r$ is a smooth function. Then $\Sigma$ parametrizes by 
\begin{equation}\label{p-hel}
X(r,\theta)=(r\cos \theta,r\sin \theta,f(r)+h \theta), \quad r\in I, \theta\in\r.
\end{equation}
If $D=r^2(1+f'^2)+h^2$, then the unit normal vector $N$ is 
$$N=\frac{1}{\sqrt{D}}(h\sin \theta-r f'\cos \theta,-h\cos \theta-rf'\sin \theta,r)$$
and the Gauss curvature is 
$$K=\frac{r^3f'f''-h^2}{D^2}.$$
Then $\Sigma$ is a $K^\alpha$-translator if
\begin{equation}\label{h1}
\left(\frac{r^3f'f''-h^2}{D^2}\right)^\alpha=D^{-1/2}\left(v_1(h\sin \theta-r f'\cos \theta)-v_2(h\cos \theta+rf'\sin \theta)+v_3r\right).
\end{equation}
As a first conclusion, we prove that $\vec{v}$ must be parallel to the $z$-axis. The proof is similar of  Proposition \ref{p-arot}.

\begin{proposition}\label{pr-ahel} Let $\Sigma$ be a $K^\alpha$-translator. If $\Sigma$ is a helicoidal surface,  then  the  axis is parallel to the speed vector $\vec{v}$.
\end{proposition}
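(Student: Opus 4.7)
The proof will mimic the argument given for Proposition \ref{p-arot}. I would start directly from the $K^\alpha$-translator equation \eqref{h1} and exploit the fact that $\theta$ appears only through $\sin\theta$ and $\cos\theta$ on the right-hand side, while everything on the left-hand side, along with $D(r)$, $f(r)$, $f'(r)$ and $f''(r)$, depends only on $r$.

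Concretely, I would rewrite \eqref{h1} by collecting the $\theta$-terms on the right as
$$\left(\frac{r^3f'f''-h^2}{D^2}\right)^\alpha=D^{-1/2}\bigl[(v_1h-v_2rf')\sin\theta-(v_1rf'+v_2h)\cos\theta+v_3 r\bigr].$$
Moving the left-hand side to the right we obtain an identity of the form $B_0(r)+B_1(r)\sin\theta+B_2(r)\cos\theta=0$ valid for every $(r,\theta)$. Since $\{1,\sin\theta,\cos\theta\}$ are linearly independent functions of $\theta$, each $B_i$ vanishes identically, which in particular yields
\begin{equation*}
v_1h-v_2rf'(r)=0,\qquad v_1rf'(r)+v_2h=0\quad \text{for every }r\in I.
\end{equation*}

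Viewing this as a linear system in the unknowns $(v_1,v_2)$, its determinant equals $h^2+r^2f'(r)^2$. If $h\neq0$ (the genuinely helicoidal case), the determinant is strictly positive, forcing $v_1=v_2=0$ and hence $\vec{v}$ parallel to the $z$-axis. If $h=0$, the surface reduces to a surface of revolution and the result is already given by Proposition \ref{p-arot}. The only remaining possibility, $h^2+r^2f'(r)^2\equiv 0$, would require both $h=0$ and $f'\equiv0$, so $\Sigma$ would be a horizontal plane, a case discarded from the outset.

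I do not anticipate a real obstacle here: the whole argument is the same separation-of-variables-in-$\theta$ trick as in Proposition \ref{p-arot}, and the only point that requires a moment of care is the brief case analysis to rule out the degenerate combinations $h=0$ and $f'\equiv 0$, which either fall back to the rotational setting or produce a plane.
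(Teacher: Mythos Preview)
Your proof is correct and follows essentially the same approach as the paper: write \eqref{h1} as a trigonometric polynomial in $\theta$, use the linear independence of $\{1,\cos\theta,\sin\theta\}$ to obtain the same two equations $v_1h-v_2rf'=0$ and $v_1rf'+v_2h=0$, and conclude $v_1=v_2=0$ from $h^2+r^2f'^2\neq 0$. The only difference is cosmetic---you phrase the last step via the determinant of a linear system while the paper eliminates directly---and your case analysis for $h=0$ is superfluous, since a helicoidal surface has $h\neq 0$ by definition.
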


\begin{proof} Equation \eqref{h1} can be written as $A_0+A_1 \cos\theta+A_2\sin\theta=0$. From $A_1=A_2=0$, we obtain
$$v_1h-v_2rf'=0,\quad v_1rf'+v_2h=0.$$
Combining both equations, we have $v_2(h^2+r^2f'^2)=0$. Thus $v_2=0$ and hence, $v_1=0$.
\end{proof}

From this proposition, we can assume that $\vec{v}=(0,0,1)$. Then \eqref{h1} is  
\begin{equation}\label{h2}
\left(\frac{r^3f'f''-h^2}{D^2}\right)^\alpha=\frac{r}{\sqrt{D}}.
\end{equation}
We will obtain a first integration of this equation. For this, let 
$$g(r)=r^2(1+f'(r)^2)+h^2.$$
 Then 
$\frac{r}{2}g'=g+r^3f'f''-h^2$ and thus \eqref{h2} is equivalent to
$$rg'=2g+2r^{\frac{1}{\alpha}}g^{\frac{4\alpha-1}{2\alpha}}.$$
If $\alpha=1/2$, the solution is $g(r)=mr^2 e^{r^2}$, with $m\in\r$. If $\alpha\not=1/2$, the solution of this equation is
$$g(r)=r^2\left(\frac{1-2\alpha}{2\alpha}r^2+m\right)^{\frac{2\alpha}{1-2\alpha}},\quad m\in\r.$$
In terms of the function $f$, we have 
$$1+f'^2=\left(\frac{1-2\alpha}{2\alpha}r^2+m\right)^{\frac{2\alpha}{1-2\alpha}}-\frac{h^2}{r^2}.$$
In particular, this gives a restriction on the domain of $f(r)$.

\begin{theorem}\label{t-hel} Let $\Sigma$ be a $K^\alpha$-translator. If $\Sigma$ is a helicoidal surface  about the $z$-axis and pitch $h$, then $\Sigma$   parametrizes as \eqref{p-hel}, where  
\begin{equation}\label{p-hel2}
f(r) =\left\{ 
\begin{array}{lll}
\pm \int^{r}\left(  me^{t^2}-\frac{h^2}{t^2}-1\right) ^{1/2}\, dt, m>0,& &\alpha =\frac{1%
}{2} \\ 
\pm \int^{r}\left( \left(\frac{1-2\alpha}{2\alpha}t^2+m\right)^{\frac{2\alpha}{1-2\alpha}}-\frac{h^2}{t^2}-1\right) ^{1/2}\, dt,m\in \mathbb{R}, & &\alpha \neq \frac{%
1}{2}.%
\end{array}%
\right.
\end{equation}
\end{theorem}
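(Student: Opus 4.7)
The plan is to derive a scalar ODE for the generating function $f$ from the translator equation, reduce it via a well-chosen change of variable to a Bernoulli-type ODE in a single unknown, solve that equation in closed form, and then back-substitute to recover $f$.

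First, I invoke Proposition \ref{pr-ahel} to assume $\vec{v}=(0,0,1)$, so that \eqref{h1} collapses to \eqref{h2}. The key idea is to introduce the function $g(r):=r^{2}(1+f'(r)^{2})+h^{2}$, which is exactly the quantity $D$ appearing in the parametrization. A direct differentiation yields $g'=2r(1+f'^{2})+2r^{2}f'f''$, and rearrangement gives the identity $r^{3}f'f''-h^{2}=\tfrac{r}{2}g'-g$, whose left-hand side is precisely the numerator that shows up in the Gauss curvature. Substituting this into \eqref{h2} and taking $1/\alpha$-th powers turns the translator equation into
$$r g' = 2 g + 2\, r^{1/\alpha}\, g^{(4\alpha-1)/(2\alpha)}.$$

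Next, I exploit the homogeneity of this equation in $r$ by setting $u:=g/r^{2}$. A short calculation shows that the two $g$-terms on the right are the $r$-derivative of $r^{2}u$ minus $2r u$ appropriately, and the exponents on $r$ conspire to cancel, producing the separable ODE $u'=2r\,u^{(4\alpha-1)/(2\alpha)}$. For $\alpha=1/2$, the exponent equals $1$ and integration gives $u=m\,e^{r^{2}}$ with $m>0$. For $\alpha\neq 1/2$, the primitive of $u^{-(4\alpha-1)/(2\alpha)}$ is $\frac{2\alpha}{1-2\alpha}u^{(1-2\alpha)/(2\alpha)}$, and separation yields $u=\bigl(\tfrac{1-2\alpha}{2\alpha}r^{2}+m\bigr)^{2\alpha/(1-2\alpha)}$ with $m\in\mathbb{R}$.

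Finally, I back-substitute $g=r^{2}u$ and use $1+f'^{2}=(g-h^{2})/r^{2}$ to obtain the explicit formula for $f'^{2}$ displayed just before the theorem statement. Taking a square root (which is responsible for the $\pm$ in \eqref{p-hel2}, since $f$ enters only through $f'^{2}$) and integrating once produces the two integral expressions of the theorem, with the two cases $\alpha=1/2$ and $\alpha\neq 1/2$ matching the two branches of \eqref{p-hel2}.

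The only genuinely non-mechanical step is the initial reduction: one has to recognize that $D$ itself is the right unknown, because the numerator of $K$ is exactly $\tfrac{r}{2}D'-D$, and then notice that the resulting first-order ODE is of Bernoulli type in the scaled variable $u=g/r^{2}$. After these two observations the proof is routine integration and reduces to the same pattern as the rotational case in Theorem \ref{t-1}, with the extra term $-h^{2}/r^{2}$ in the expression for $1+f'^{2}$ encoding the pitch. Implicit in the conclusion is the domain restriction that the radicand in \eqref{p-hel2} be non-negative, which I would note as an immediate consequence of the formulas.
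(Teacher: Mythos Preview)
Your proposal is correct and follows essentially the same route as the paper: introduce $g(r)=D=r^{2}(1+f'^{2})+h^{2}$, observe that the numerator of $K$ equals $\tfrac{r}{2}g'-g$, and reduce \eqref{h2} to the first-order ODE $rg'=2g+2r^{1/\alpha}g^{(4\alpha-1)/(2\alpha)}$, then integrate and back-substitute. The only cosmetic difference is that you make the Bernoulli substitution $u=g/r^{2}$ explicit to solve this ODE, whereas the paper simply records the solution $g(r)=mr^{2}e^{r^{2}}$ (for $\alpha=\tfrac12$) and $g(r)=r^{2}\bigl(\tfrac{1-2\alpha}{2\alpha}r^{2}+m\bigr)^{2\alpha/(1-2\alpha)}$ (otherwise); the two are the same computation.
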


We now show   examples of $f'$ for some choices of $\alpha$. In all cases, the pitch is $h=1$. See Figures \ref{fig3} and \ref{fig4}
\begin{enumerate}
\item Case $\alpha=1/2$. We take $m=1$ in \eqref{p-hel2}. The function $f$ is defined provided $r^2\geq \log((1+r^2)/r^2)$, that is, 
$[r_0,\infty)$, where $r_0\approx 0.898$.   
\item Case $\alpha=1/4$. Let $m=0$. Then $1+f'^2=-1/r^2+r^2$. Now the restriction on $r$ is $r^4-r^2-1\geq 0$, that is, the domain is $[r_0,\infty)$ with $r_0\approx  1.272$. This case appeared in \cite{le}.
\item Case $\alpha=1$. Let $m=0$ in   \eqref{p-hel2}.  Then $1+f'^2=   4/r^4-1/r^2$ and the domain of $f$ is $[0,r_0]$ with $r_0\approx 1.249$. Here $\lim_{r\to 0}f(r)=\lim_{r\to 0}f'(r)=\infty$.
\end{enumerate}

 \begin{figure}[hbtp]
\begin{center}
\includegraphics[width=.3\textwidth]{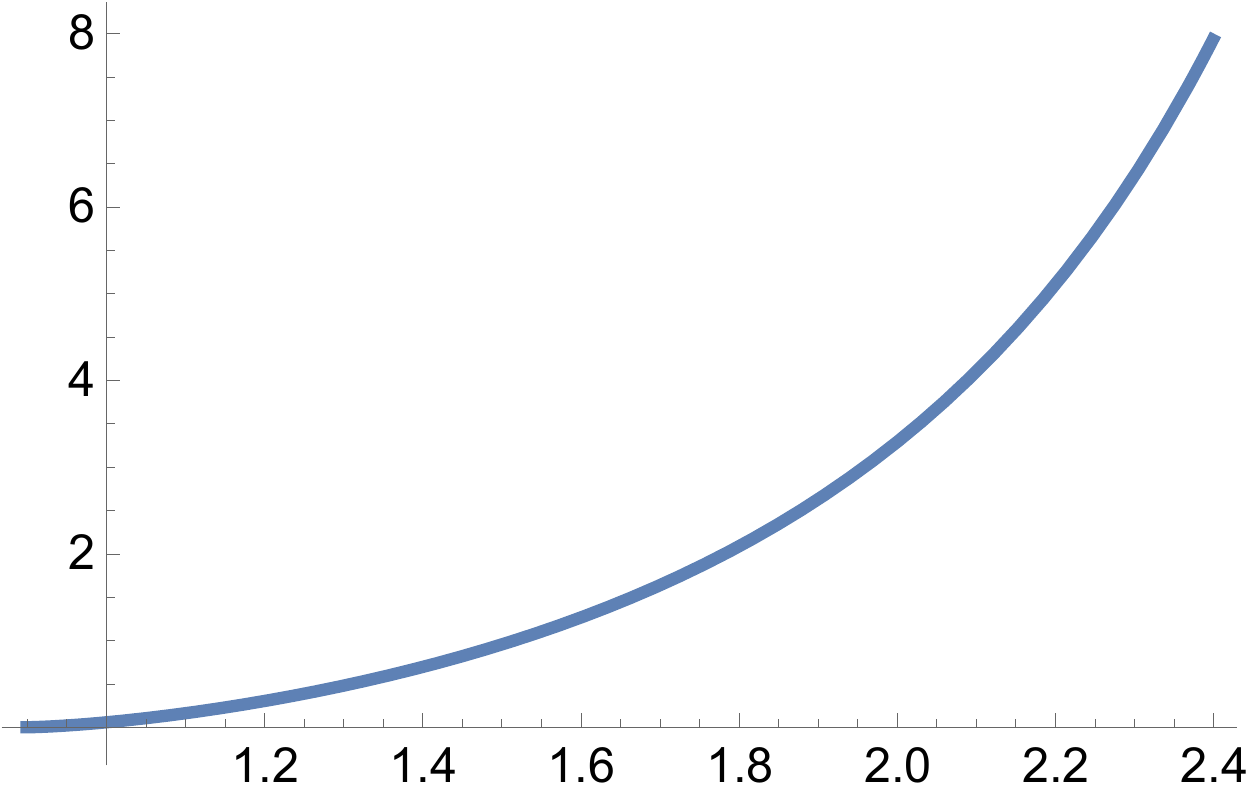}\quad \includegraphics[width=.3\textwidth]{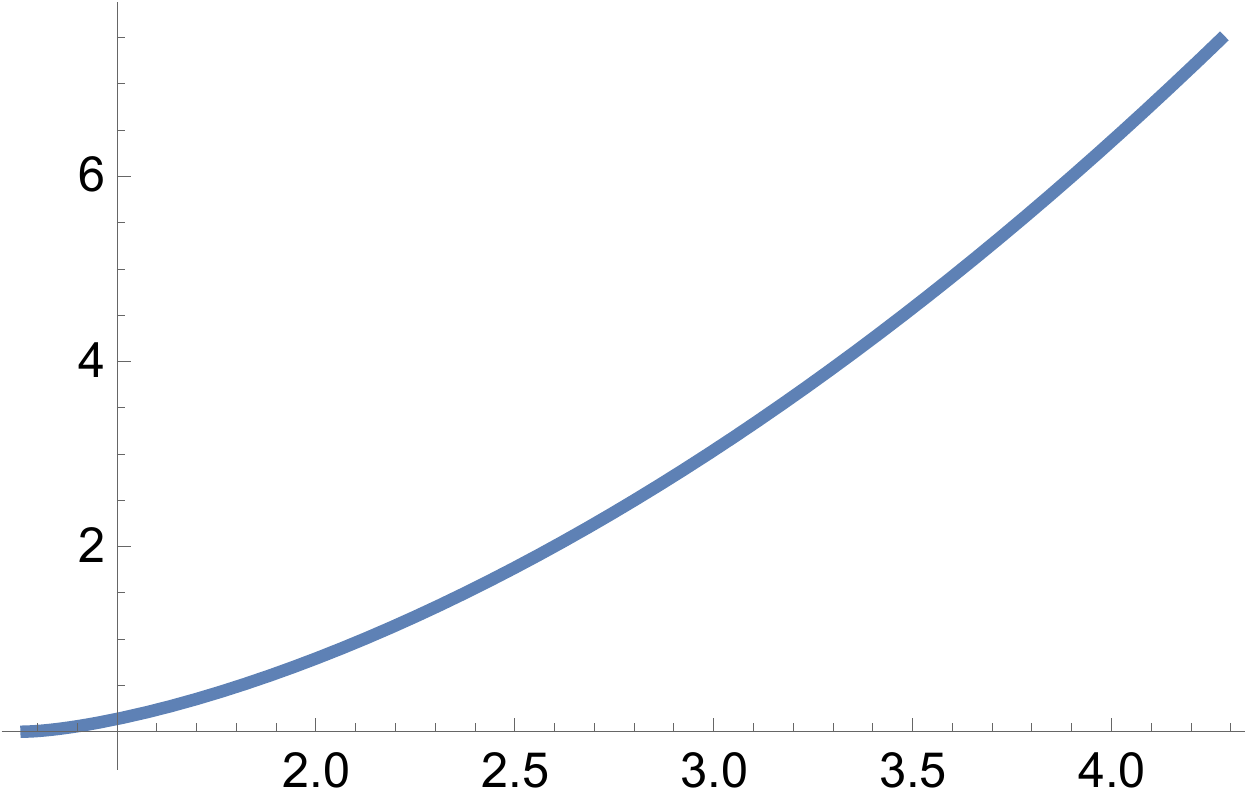}\quad \includegraphics[width=.3\textwidth]{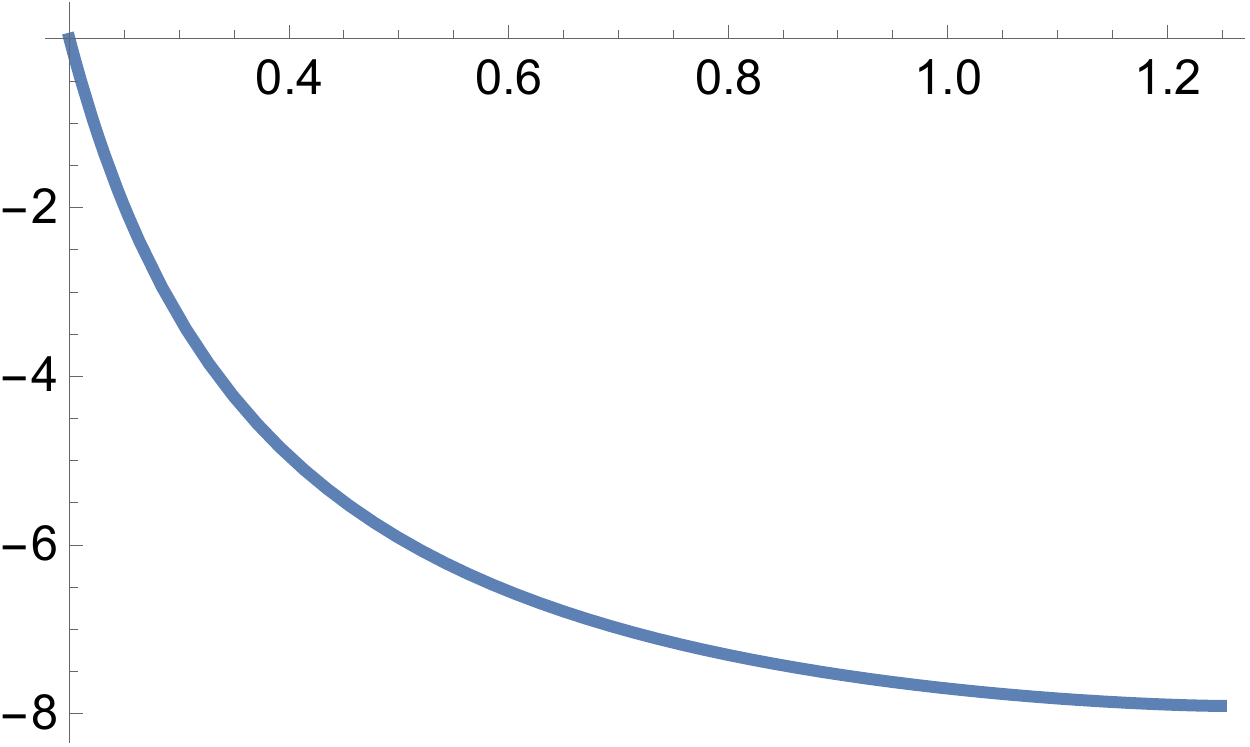}
\end{center}
\caption{Generating curves of helicoidal $K^\alpha$-translators: $\alpha=1/2$ (left), $\alpha=1/4$ (middle) and $\alpha=1$ (right).}\label{fig3}
\end{figure}

 \begin{figure}[hbtp]
\begin{center}
\includegraphics[width=.2\textwidth]{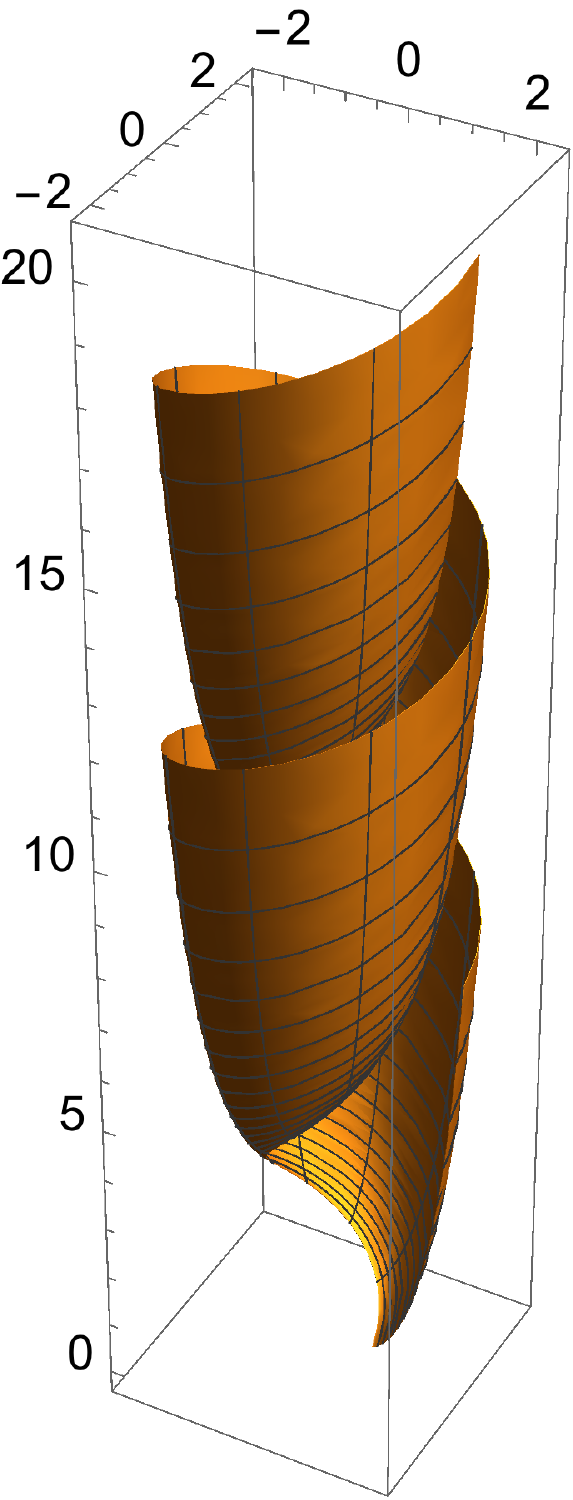}\qquad \includegraphics[width=.27\textwidth]{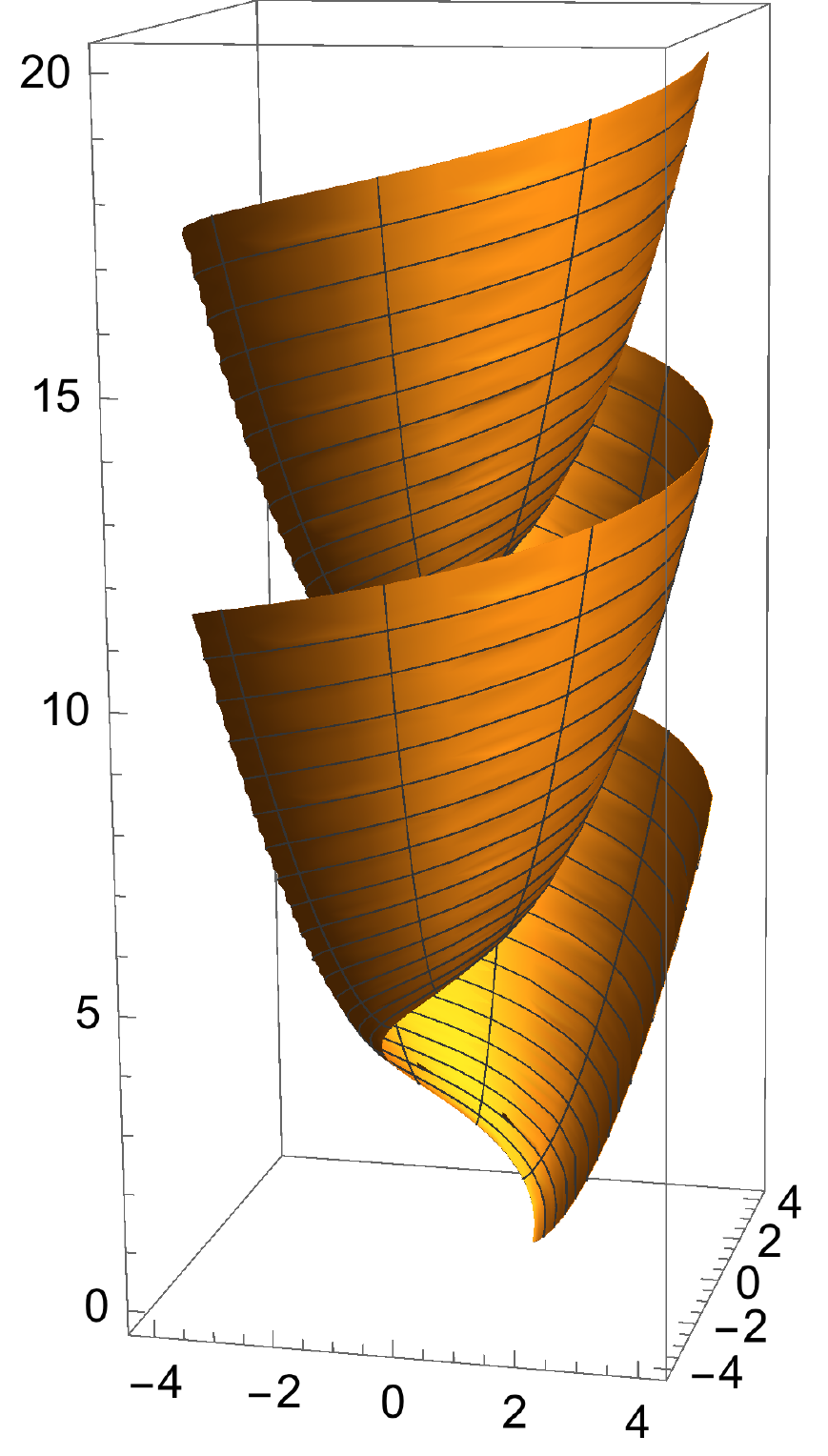}\qquad \includegraphics[width=.12\textwidth]{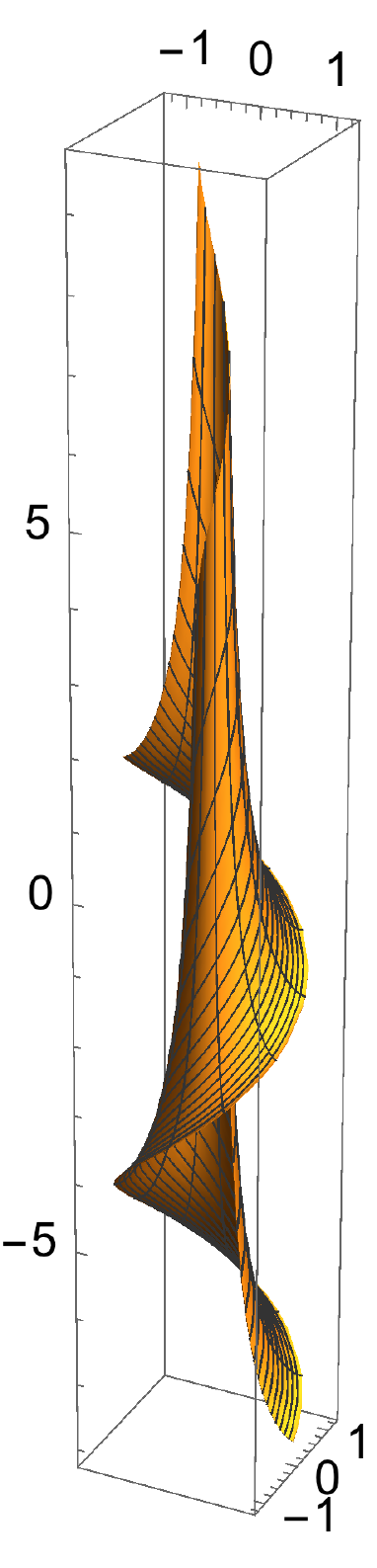}
\end{center}
\caption{Helicoidal $K^\alpha$-translators: $\alpha=1/2$ (left), $\alpha=1/4$ (middle) and $\alpha=1$ (right).}\label{fig4}
\end{figure}

Following Lee \cite{le} we  may  also use the approach of Bour (\cite{dd}).  More explicitly, the\textit{ Bour coordinates} are defined by $r\mapsto s$ and $\theta
\mapsto t,$ $t=\theta +\Theta$, where%
\begin{equation*}
ds=\left( 1+\frac{r^2}{r^2+h^2}f'^2 \right) ^{1/2}dr,\quad
d\Theta =\frac{h}{r^2+h^2}f'dr.
\end{equation*}%
The first fundamental form is now $I=ds^2+U^2 dt^2,$ where the
so-called \textit{Bour function} is introduced by the relation $%
U^2=r^2+h^2.$ Using the Bour function $U$, the terms $r$, $f$
and $\Theta $ can be determined by

\begin{equation}
\left\{ 
\begin{array}{l}
r=\sqrt{U^2-h^2}, \\ 
df^2=\frac{U^2}{(U^2-h^2)^2}\left( U^2\left(
1-(\frac{dU}{ds})^2\right) -h^2\right) ds^2, \\ 
d\Theta =\frac{h}{U^2}df.%
\end{array}%
\right.  \label{h3}
\end{equation}
With the above discussion, the Gauss curvature is now $K=-(d^2 U/ds^2)/U$
and $\left\langle N,(0,0,1)\right\rangle =dU/ds.$ Then \eqref{k1} is now

\begin{equation*}
-\frac{1}{U}\frac{d^2 U}{ds^2}=\left( \frac{dU}{ds}\right) ^{\frac{1}{\alpha }}
\end{equation*}%
or equivalently%
\begin{equation*}
\frac{1}{dU/ds}\frac{d^2 U}{ds^2}=-U\left( \frac{dU}{ds}\right) ^{\frac{%
1-\alpha }{\alpha }}.
\end{equation*}%
Setting $P=dU/ds$ and $\frac{dP}{dU}=(ds/dU)(d^2 U/ds^2)$, we have
\begin{equation*}
\frac{dP}{dU}=-UP^{\frac{1-\alpha }{\alpha }}.
\end{equation*}%
A first integration is%
\begin{equation}
ds=\left\{ 
\begin{array}{lll}
m^{-1}e^{\frac{U^2}{2}}dU,  & &\alpha =1/2 \\ 
(m-\frac{2\alpha -1}{2\alpha }U^2)^{\frac{\alpha }{1-2\alpha }}dU, & &\alpha \neq 1/2,%
\end{array}%
\right.   \label{h4}
\end{equation}%
where $m\in\r$ is a integration constant with $m>0$ if $\alpha=1/2$.
Because $s$ can be viewed a function of $U$, we may interchange their roles. Therefore,  we obtain again a classification of helicoidal $K^{\alpha }$-translators in terms of the Bour function $U$.

\begin{theorem}\label{t-hel2}
Let $\Sigma $ be a $K^{\alpha }$-translator. If $\Sigma $ is a helicoidal
surface about the $z$-axis and pitch $h$, then $\Sigma$    parametrizes as%
\begin{equation*}
X(U,t)=(\sqrt{U^{2}-h^2}\cos (t-\Theta (U)),\sqrt{U^2-h^2}\sin
(t-\Theta (U)),f(U)+h(t-\Theta (U))),
\end{equation*}%
where $U$ is the Bour function, $d\Theta =hU^{-2}df$ and  $df$ is 
\begin{equation}
\left\{ 
\begin{array}{lll}
\frac{\pm m^{-1}U}{U^2-h^2}\left( U^2\left( 1-m^2 e^{-U^2}\right)
-h^2\right) ^{\frac{1}{2}}e^{\frac{U^2}{2}}dU, & &
\alpha =1/2 \\ 
\frac{\pm U}{U^2-h^2}\left( U^2 \left( 1-(m-\frac{2\alpha -1}{2\alpha }%
U^2)^{\frac{2\alpha }{2\alpha -1}}\right) -h^2\right) ^{\frac{1}{2}}(m-%
\frac{2\alpha -1}{2\alpha }U^2)^{\frac{\alpha }{1-2\alpha }}dU, & &\alpha \neq 1/2,%
\end{array}%
\right.   \label{h5}
\end{equation}
with $m\in\r$ ($m>0$ if $\alpha=1/2$).
\end{theorem}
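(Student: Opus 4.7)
The plan is to assemble the ingredients that have already been laid out in the paragraphs preceding the statement. Starting from the helicoidal parametrization \eqref{p-hel}, I would carry out the Bour change of variables $r\mapsto s$, $\theta\mapsto t=\theta+\Theta$ prescribed by \eqref{h3}. After this change the first fundamental form reads $ds^2+U^2\,dt^2$ with $U=\sqrt{r^2+h^2}$, so the whole helicoidal surface is encoded in the single scalar function $U=U(s)$, and the parametric formulas $r=\sqrt{U^2-h^2}$, $df$ and $d\Theta$ in \eqref{h3} let us reconstruct $\gamma$ and the surface once $U(s)$ is known.

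In Bour coordinates the Gauss curvature and the third component of the unit normal simplify to $K=-U''/U$ and $\langle N,(0,0,1)\rangle=U'$, where prime denotes $d/ds$. Combined with Proposition \ref{pr-ahel}, which allows us to take $\vec v=(0,0,1)$, the translator equation \eqref{k1} becomes the autonomous second order ODE $-U''/U=(U')^{1/\alpha}$. The natural next step is to lower the order by setting $P=U'$ so that $U''=P\,dP/dU$; this transforms the equation into the separable first order ODE $dP/dU=-U\,P^{(1-\alpha)/\alpha}$. Separating variables and integrating yields a closed-form relation between $P$ and $U$, which is logarithmic exactly when the exponent $(1-\alpha)/\alpha$ equals $-1$, i.e. when $\alpha=1/2$; inverting to recover $ds=dU/P$ gives the two cases of \eqref{h4}, with $m>0$ forced in the exponential regime so that $P^2\ge 0$.

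To finish I would substitute the expression for $dU/ds$ coming from \eqref{h4} into the second line of \eqref{h3}, obtaining $df$ as a function of $U$; this is exactly \eqref{h5}, where the $\pm$ sign records the two orientations of the generating curve. Then $d\Theta=hU^{-2}\,df$ from the third line of \eqref{h3}, and replacing $r$, $f$, $\theta$ in \eqref{p-hel} by $\sqrt{U^2-h^2}$, $f(U)$ and $t-\Theta(U)$ respectively produces the parametrization in the statement. No serious obstacle is expected: the work is essentially bookkeeping of signs, constants and the $\alpha=1/2$ limit, together with the verification that the classical identities $K=-U''/U$ and $N_3=U'$ for the metric $ds^2+U^2dt^2$ apply unchanged in the helicoidal (non-rotational) setting, which is guaranteed by the Bour coordinates construction.
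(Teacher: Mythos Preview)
Your proposal is correct and follows essentially the same route as the paper: the derivation of \eqref{h4} via the Bour substitution and the reduction $P=dU/ds$ is already carried out in the text preceding the theorem, and the paper's proof then simply inverts \eqref{h4} to view $U$ as the independent variable, feeds $dU/ds$ into \eqref{h3} to obtain \eqref{h5}, and replaces $r,\theta$ in \eqref{p-hel} by $\sqrt{U^2-h^2}$ and $t-\Theta(U)$. Your outline matches this step for step, including the identification of the $\alpha=1/2$ case as the logarithmic/exponential branch.
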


\begin{proof}
Because we see $s$ as a function of $U$ in \eqref{h4}, $U$ can be considered as a new
variable. Then we have the first equality in \eqref{h3},\ where $r$ depends
on this new variable $U$ and as do $f$ and $\Theta .$ Considering this,
together $\theta =t-\Theta $ in \eqref{p-hel}, we have the parametrization of
the helicoidal $K^{\alpha }$-translator. Up to $\alpha =1/2$ or not, from %
\eqref{h4} we complete the proof.
\end{proof}

Again, for some particular values of $\alpha$, \eqref{h4} can be explicitly
integrated. We present the cases $\alpha=1/4$ (\cite{le}), $\alpha=1/3$ and $\alpha=1$.  

\begin{enumerate}
\item Case $\alpha =1/4.$ The solution is 
\begin{equation*}
s=\left\{ 
\begin{array}{lll}
 \frac{1}{2}\left( U\sqrt{m+U^2}+m\cosh ^{-1}(U)\right), & U\geq\{\sqrt{-m},1\} & \mbox{when } m<0, \\ 
 \frac{1}{2}U^2, & U>0 &\mbox{when } m=0, \\ 
\frac{1}{2}\left( U\sqrt{m+U^2}+m\sinh ^{-1}(U)\right), & U>0  &\mbox{when } m>0.%
\end{array}%
\right. 
\end{equation*}

\item Case $\alpha =1/3$. The solution is  
\begin{equation*}
s=mU+\frac{U^3}{6},
\end{equation*}%
where $U\in (0,\infty ).$ 

\item Case $\alpha =1.$ Then,  
\begin{equation*}
s=\left\{ 
\begin{array}{lll}
\sqrt{\frac{2}{m}}\tanh ^{-1}\left( \frac{U}{\sqrt{2m}}\right), & U\leq\sqrt{2m}&\mbox{when } m>0\\
\frac{2}{U}, & U>0&\mbox{when } m=0\\
-\sqrt{\frac{2}{-m}}\tan ^{-1}\left( \frac{U}{\sqrt{-2m}}\right), & U\leq \sqrt{-2m}&\mbox{when } m<0.
\end{array}\right.
\end{equation*}%
In particular, we can express $U$ in terms of $s$,  
\begin{equation*}
U=\left\{ 
\begin{array}{lll}
\sqrt{\frac{m}{2}}\tanh\left(  \sqrt{2m}s\right), &  \mid s\mid\leq (2m)^{-1/2} &\mbox{when } m>0\\
\frac{2}{s}, & s>0&\mbox{when } m=0\\
-\sqrt{\frac{-m}{2}}\tan\left( \sqrt{-2m}s\right), &  \mid s\mid \leq (-2m)^{-1/2} &\mbox{when } m<0.
\end{array}\right.
\end{equation*}%
\end{enumerate}

\section{$K^\alpha$-translators of translation type}\label{sec3}

By a translation surface of $\r^3$ we mean a surface given by the sum of two curves contained in two coordinate planes. This is a particular case of a Darboux surface where $A(t)$ is the identity. After a rigid motion, the surface is the sum of the curves $\gamma(x)= (x,0,f(x))$ and $\beta(y)= (0,y,g(y))$, where $f\colon I\subset\r\to\r$ and $g\colon J\subset\r\to\r$ are smooth functions in one variable. Thus the surface parametrizes  by  
\begin{equation}
X(x,y) =(x,y,f(x) +g(y)),\ x\in I, y\in J. \label{ts1}
\end{equation}%
If  we see the surface $\Sigma$ as the graph of $z=f(x)+g(y)$, then the problem of finding all translation surfaces that are $K^\alpha$-translator is equivalent to ask which are the solutions of \eqref{k1} obtained by separation of variables $z=f(x)+g(y)$. In this section we classify all $K^\alpha$-translators of translation type. 

We calculate all terms of equation \eqref{k1}. Let us observe that once we have the parametrization \eqref{ts1}, we cannot prescribed the speed $\vec{v}$ because the parametrization \eqref{ts1} was previously fixed after a rigid motion.  Thus in order to calculate all terms of \eqref{k1}, the vector $\vec{v}=(v_1,v_2,v_3)$ is assumed in all its generality. The computation of  \eqref{k1} gives  
\begin{equation}
\frac{(f''g'')^\alpha}{(1+f'^2+g'^2)^{2\alpha }}=\frac{v_{3}-v_{1}f'-v_{2}g'}{(1+f'^2+g'^2)^{1/2}},  \label{ts2}
\end{equation}%
where the prime denotes the derivatives with respect to the corresponding variables $x$ or $y$ in each case. 

In order to clarify the arguments, we separate the case $\alpha=1/4$. This case appears because the denominators in \eqref{ts2} are cancelled. 

\begin{theorem}\label{t-31}
 Let $\Sigma$ be a $K^{1/4}$-translator with speed $\vec{v}$ of translation type parametrized by \eqref{ts1}. Up to a change of the roles of $f$ and $g$,  the function $f$ is $f(x)=x^2/m+ax+b$, $a,b,m\in\r$, $m\not=0$ and  $g$ is one of the following functions depending on the speed $\vec{v}$:
\begin{enumerate}
\item If the speed is $\vec{v}=(0,0,1)$, then $g(y)=my^2/4+cy+d$, $c,d\in\r$.
\item If the speed is $\vec{v}=(0,1,v_3)$, then 
 \begin{equation}\label{ts-g}
g(y)=\frac{v_3 (2 c+m y)}{m}-\frac{\left(\frac{3}{2}\right)^{2/3} (2 c+m y)^{2/3}}{m }+d,
\end{equation}
where $c,d\in\r$.

\end{enumerate}
\end{theorem}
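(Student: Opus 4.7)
The plan is to exploit the special structure that equation \eqref{ts2} acquires when $\alpha=1/4$. With this value, the exponent $2\alpha$ in the denominator of the left-hand side equals the exponent $1/2$ in the denominator of the right-hand side, so the common factor $(1+f'^2+g'^2)^{1/2}$ cancels and the translator equation collapses to
\begin{equation}\label{pl-star}
(f''(x)\,g''(y))^{1/4} \;=\; v_3-v_1 f'(x)-v_2 g'(y).
\end{equation}
This identity, free of denominators, is the object of the analysis. I may assume $f''\not\equiv 0$ and $g''\not\equiv 0$, since otherwise the left side of \eqref{pl-star} vanishes and the right side becomes a separated identity in $f'$ and $g'$, quickly forcing both functions to be affine and $\Sigma$ to be a plane, a case excluded in the introduction.

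The next step is a case split on $\vec{v}$. If $v_1=v_2=0$, then $\vec{v}=(0,0,1)$ and \eqref{pl-star} reads $f''(x)g''(y)=1$; a product of a function of $x$ alone by a function of $y$ alone is constant only when each factor is constant, so $f''=2/m$ and $g''=m/2$, giving case (1) after two integrations. If one component vanishes, say $v_1=0$ and $v_2=1$ (so $\vec{v}=(0,1,v_3)$), differentiating \eqref{pl-star} in $x$ gives $f''' g''=0$, hence $f'''\equiv 0$ on the open set where $g''\neq 0$; thus $f(x)=x^2/m+ax+b$ with $m\neq 0$, and substituting back produces the first-order ODE $g''(y)=(m/2)(v_3-g'(y))^4$. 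Separating variables with $u=v_3-g'$ gives $u^{-3}=(3m/2)y+\mathrm{const}$, and a second integration together with a suitable relabelling of the integration constants recovers formula \eqref{ts-g}.

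It remains to exclude the case $v_1\neq 0$ and $v_2\neq 0$. Setting $A(x)=v_1 f'(x)-v_3$ and $B(y)=v_2 g'(y)$ rewrites \eqref{pl-star} in the clean form $A'(x)B'(y)=v_1 v_2(A(x)+B(y))^4$. Taking logarithms and differentiating once in $x$ gives $A''(x)/A'(x)=4A'(x)/(A(x)+B(y))$; differentiating this again in $y$ yields $0=-4A'(x)B'(y)/(A(x)+B(y))^2$, forcing $A'\equiv 0$ or $B'\equiv 0$, that is, $f$ or $g$ affine. But an affine $g$ substituted back into \eqref{pl-star} forces $f$ affine too (since $v_1\neq 0$ makes $v_3-v_1 f'-v_2 g'$ depend on $x$), so $\Sigma$ is a plane, contradicting the hypothesis; the symmetric situation is identical. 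This excludes the remaining case and completes the classification.

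The main obstacle I expect is the careful handling of the vanishing sets of $f''$, $g''$ and $f'''$: the divisions and logarithmic manipulations above are valid only on the open subset where these quantities are nonzero. I would address this by noting that \eqref{pl-star}, after raising to the fourth power, is a fully nonlinear elliptic equation whose smooth solutions are real analytic, so any of these derivatives that vanishes on an open subset must vanish identically, which keeps the case split above exhaustive.
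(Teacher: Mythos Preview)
Your argument is correct, and it reaches the same conclusions as the paper, but the organization is genuinely different. The paper does not split on the components of $\vec v$; instead it first asks whether $f''$ is constant. If $f''=2/m$ is constant, substituting into \eqref{t-14} forces $v_1=0$ directly (the $x$-dependence on the right must disappear), and then the sub-cases $v_2=0$ and $v_2\neq 0$ produce items (1) and (2). If $f''$ is not constant, the paper differentiates \eqref{t-14} once in $x$ and once in $y$ to obtain the single identity $f'''(x)\,g'''(y)=0$, which forces $g''$ (or $f''$) to be constant and reduces everything to the first case after exchanging $f$ and $g$. So the paper never needs to treat the situation $v_1\neq 0,\ v_2\neq 0$ as a separate branch: that possibility is ruled out \emph{a posteriori}, as a by-product of showing that one of $f'',g''$ is necessarily constant. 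Your route, by contrast, isolates the case $v_1v_2\neq 0$ up front and kills it with a clean logarithmic-differentiation trick on $A'B'=v_1v_2(A+B)^4$; this is a nice alternative, and your use of real-analyticity to control the vanishing sets is a point the paper leaves implicit. Either decomposition works; the paper's is slightly shorter because the single relation $f'''g'''=0$ does all the heavy lifting at once.
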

 
 \begin{proof}
Now \eqref{ts2} is 
\begin{equation}\label{t-14} 
(f''g'')^{1/4} = v_{3}-v_{1}f'-v_{2}g' .
\end{equation}
Let us observe the symmetry of the roles of $f$ and $g$, so it is enough to  distinguish   cases according to the function $f$. Notice that $f''$ is not constantly $0$ because the case  $K=0$ was initially discarded.
\begin{enumerate}
\item Case $f''=2/m\not =0$ is a non-zero constant, $m\not=0$. In particular, $f(x)=x^2/m+ax+b$, $a,b\in\r$. From \eqref{t-14}, $v_1=0$ and 
$(2g''/m)^{1/4}=v_3-v_2g'$. Since $g''\not=0$ (otherwise, $K=0$),   then $g''=\frac{m}{2}(v_3-v_2 g')^4$.  If $v_2=0$,  we assume that $v_3=1$ and the solution is $g(y)=my^2/4+cy+d$, $c,d\in\r$. If $v_2\not=0$, then the solution of this equation is \eqref{ts-g}.

\item Suppose that $f''$ is not constant. Differentiating successively \eqref{t-14} with respect to $x$ and next with respect to $y$, we obtain   $f'''(x)g'''(y)=0$ for all $x,y$. If at some $x$, $f'''(x)\not=0$, then $g''$ is constant in some interval and we are in the previous case (1) interchanging the roles of $f$ and $g$. Thus $f'''=0$ in its domain, which it is a contradiction because $f''$ is not a constant function.
\end{enumerate}
 \end{proof}

Let us observe that the surfaces of the case (1) of Theorem \ref{t-31} are affinities of the paraboloid $z=x^2+y^2$ and that the speed $\vec{v}=(0,0,1)$. Other consequence   of this result is that  we find $K^{1/4}$-translators where the speed $\vec{v}$ is parallel to the $xy$-plane by choosing $\vec{v}=(0,1,0)$. If we assume that the speed $\vec{v}$ is $(0,0,1)$, as usually is taken as a convention in the literature  (e.g. \cite{cdl,ur2}), then changing the roles of $y$ and $z$, we can provide  examples of translation surfaces $X(x,z)=(x,f(x)+g(z),z)$ whose speed is $(0,0,1)$. 

\begin{example} Let $\alpha=1/4$ and $\vec{v}=(0,0,1)$. Suppose that a $K^{1/4}$-translator  parametrizes as $y=f(x)+g(z)$. In this case, \eqref{k1} is 
$$(f''g'')^{1/4}=g'.$$
Hence $g'>0$ and
$$\frac{g''}{g'^4}=\frac{1}{f''}=m$$
for some constant $m\not=0$. Integrating,
$$f(x)=\frac{1}{2m}x^2+ax+b,\quad g(z)=-\frac{1}{2m}(-3mz+c)^{2/3}+d,$$
with $-3mz+c>0$, $c,d\in\r$. For example, choose $m=1$ and $a=b=c=d=0$. Then the surface is 
$$X(x,z)=(x,\frac{1}{2}x^2-\frac{1}{2}(-3z)^{2/3},z).$$
If we see this surface as the graph on the $xy$-plane,  and after a symmetry about the $z$-plane, we have 
$$z=\frac13(x^2-2y)^{3/2}.$$
\end{example}

  We now consider the general case $\alpha\not=1/4$.
\begin{theorem}\label{t-32}
If $\alpha\not=1/4$, there are not   $K^{\alpha }$-translators that are surfaces of translation. 
  \end{theorem}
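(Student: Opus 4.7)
The plan is to reduce \eqref{ts2} to a polynomial identity in the independent variables $p = f'(x)$ and $q = g'(y)$, and to extract a contradiction by reading off coefficients. First I would rewrite \eqref{ts2} as
\[
(f''g'')^{\alpha} = A\,B^{2\alpha - 1/2}, \qquad A := v_3 - v_1 f' - v_2 g', \quad B := 1 + f'^{2} + g'^{2}.
\]
Since the excluded case $K\equiv 0$ is equivalent to $f''g''\equiv 0$, the product $f''(x)g''(y)$ is nonzero on some open rectangle $I'\times J'\subset I\times J$, and on this rectangle both sides of the displayed equation have constant nonzero sign. Passing to logarithms is therefore legitimate and gives
\[
\alpha\log f''(x) + \alpha\log g''(y) = \log A + \Big(2\alpha - \tfrac12\Big)\log B.
\]

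Next I would apply $\partial_x\partial_y$ to this identity. The left-hand side is annihilated, and a direct computation of the mixed partial on the right, followed by cancellation of the nonzero factor $f''(x)g''(y)$, produces
\[
\frac{v_1 v_2}{A^{2}} + \frac{(8\alpha - 2)\,f' g'}{B^{2}} = 0,
\]
or equivalently
\[
v_1 v_2\,B^{2} + (8\alpha - 2)\,f'g'\,A^{2} = 0.
\]
Because $f''$ and $g''$ are nowhere zero on $I'\times J'$, the map $(x,y)\mapsto(p,q):=(f'(x),g'(y))$ is a diffeomorphism onto an open subset of $\mathbb{R}^{2}$. Consequently the preceding relation becomes a polynomial identity
\[
v_1 v_2(1 + p^{2} + q^{2})^{2} + (8\alpha-2)\,pq\,(v_3 - v_1 p - v_2 q)^{2} \equiv 0
\]
in the free variables $p$ and $q$.

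The hypothesis $\alpha\neq 1/4$ enters precisely at this point, via $8\alpha-2\neq 0$. Expanding and comparing coefficients of well-chosen monomials, the coefficient of $p^{3}q$ gives $v_{1}^{2}=0$, the coefficient of $pq^{3}$ gives $v_{2}^{2}=0$, and the coefficient of $pq$ gives $v_{3}^{2}=0$; hence $\vec v = 0$, contradicting $|\vec v|=1$. The conceptual core is the identity $\partial_x\partial_y\log(f''(x)g''(y))=0$, which collapses the transcendental translator equation to a polynomial constraint. The only technical obstacle is keeping track of the several monomials in the expansion, but three well-chosen coefficients suffice once $8\alpha-2\neq 0$ is available.
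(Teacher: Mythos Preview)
Your argument is correct and takes a genuinely different route from the paper's. The paper differentiates \eqref{ts4} once with respect to $x$ only, obtaining an expression that is a cubic polynomial in $g'(y)$ with coefficients depending on $x$ (and involving $f'''$); setting those four coefficients to zero first yields $v_2=0$ and then, after substituting one relation into another, a contradiction from $(4\alpha-1)(v_3-v_1f')f'f''=0$. Your approach instead takes the mixed partial $\partial_x\partial_y$ of the logarithm of \eqref{ts2}, which immediately kills the left-hand side and, after clearing denominators, produces a single polynomial identity in the two variables $(p,q)=(f',g')$ on an open set. This is more symmetric in $f$ and $g$, avoids third derivatives entirely, and dispatches all three components $v_1,v_2,v_3$ simultaneously by reading off three monomial coefficients. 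The paper's method has the mild advantage of needing only one differentiation, but it pays for this with the appearance of $f'''$ and a less transparent endgame. One small point of hygiene: when you pass to logarithms you should write $\log|\cdot|$ throughout (e.g.\ $\log|f''|$, $\log|A|$), since $f''$, $g''$ or $A$ may be negative; this does not affect the computation because $\partial_x\log|f''|=f'''/f''$ regardless of sign, and the separation $\log|f''g''|=\log|f''|+\log|g''|$ still gives $\partial_x\partial_y$ of the left side equal to zero.
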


\begin{proof} Again by the symmetry of the roles of $f$ and $g$, we discuss according to the function $f$. Recall that $f''$ (and consequently, $g''$) cannot be constantly $0$ because then $K$ would be $0$. Then  \eqref{ts2} is
\begin{equation}
(g'')^\alpha=(1+f'^{2}+g'^2)^{\frac{4\alpha
-1}{2}}(v_{3}-v_{1}f'-v_{2}g')(f'')^{-\alpha }.  \label{ts4}
\end{equation}%

We differentiate \eqref{ts4} with respect to $x$, and after some manipulations, we arrive to 
$$(4\alpha-1)f'f''P-(1+f'^2+g'^2)\left(v_1 (f'')^{1-\alpha}+\alpha (f'')^{-\alpha-1}f'''P\right)=0,$$
where $P=v_3-v_1f'-v_2g'$. The above equation is a polynomial equation on $g'=g'(y)$ of degree $3$, which we write as 
$$A_0+A_1g'+A_2g'^2+A_3g'^3=0,$$
where   all coefficients $A_i$ are functions on the variable $x$. Therefore they must vanish because $g'\not=0$. We have 
\begin{equation*}
\begin{split}
A_1&=-v_2(4\alpha-1)f'f''+\alpha v_2  (1+f'^2)(f'')^{-\alpha-1}f''',\\
A_3&=\alpha (f'')^{-\alpha-1}f'''v_2.
\end{split}
\end{equation*}
 Since $4\alpha-1\not=0$, we obtain $v_2=0$. Now 
\begin{equation*}
\begin{split}
A_0&=(4\alpha-1)(v_3-v_1f')f'f''-(1+f'^2)(v_1 (f'')^{1-\alpha}+\alpha (f'')^{-\alpha-1}f'''(v_3-v_1f')),\\
 A_2&=v_1 (f'')^{1-\alpha}+\alpha (f'')^{-\alpha-1}f'''(v_3-v_1f').
 \end{split}
\end{equation*}
Using $A_2=0$ into $A_1$, we arrive to $(4\alpha-1)(v_3-v_1f')f'f''=0$, obtaining a contradiction. This completes the proof.
\end{proof}

Translations surfaces appear as surfaces obtained by the translation of a curve along another curve. As we have seen, in case that the two curves are included in coordinate planes, then the surface can be written as $z=f(x)+g(y)$. As we said, the problem to classify all translation surfaces that are $K^\alpha$-translators is equivalent to solve equation \eqref{k1} by separation of variables. Other way of separation of variables is assuming that $z=f(x)g(y)$, for two smooth functions $f$ and $g$. However, the equation \eqref{k1} is difficult to solve in all its generality. We only show an example where we can obtain non-trivial examples if  $\alpha=1/4$.

\begin{example}\label{ex-homo}. Assume $\alpha=1/4$ and the speed is $\vec{v}=(0,0,1)$. Instead to assume $z=f(x)g(y)$, we suppose $x=f(z)g(y)$. Then the parametrization of the surface is $X(z,y)=(f(z)g(y),y,z)$, $z\in I$, $y\in J$ and \eqref{k1} is%
\begin{equation}
fgf^{\prime \prime }g^{\prime \prime }-f^{\prime 2}g^{\prime 2}=f^{\prime
4}g^{4}.  \label{h7}
\end{equation}%
Because $\alpha=1/4$, then $K>0$ and so, $f''(x)g''(y)\not=0$. We divide \eqref{h7} with $f^{\prime 2}gg^{\prime \prime },$ obtaining%
\begin{equation}
\frac{ff^{\prime \prime }}{f^{\prime 2}}-\frac{g^{\prime 2}}{gg^{\prime
\prime }}=f^{\prime 2}\frac{g^{3}}{g^{\prime \prime }}.  \label{h8}
\end{equation}%
Differentiating successively \eqref{h8} with respect to $z$ and $y$, we have
\begin{equation*}
2f^{\prime }f^{\prime \prime }\left( \frac{g^{3}}{g^{\prime \prime }}\right)
^{\prime }=0.
\end{equation*}%
Because $f'f''\not=0$, then  $g^{\prime \prime }=ag^{3}$, where  $a\in \mathbb{R},$ $a\neq 0.$ Now %
\eqref{h8} is%
\begin{equation*}
\frac{ff^{\prime \prime }}{f^{\prime 2}}-\frac{f'^2}{a}= \frac{g^{\prime 2}}{ag^{4}}=b, 
\end{equation*}%
for some nonzero constant $b$.  Then $g'^2=abg^4$ and differentiating and using  that $g''=ag^3$, we deduce that $b=1/2$. For the function $g$, we have  
\begin{equation*}
g^{\prime 2}=\frac{a}{2}g^{4},
\end{equation*}%
in particular, $a>0$. The solution of this equation is   $g(y)=\pm \frac{2}{ay+c},$ $c\in \mathbb{R}$. 
We come back to \eqref{h7}, obtaining%
\begin{equation*}
ff^{\prime \prime }-\frac{1}{2}f^{\prime 2}=\frac{f'^4}{a}.
\end{equation*}%
We see $f'$ as a function of $f$, $f^{\prime }=p(f)$. Then $p^{\prime }=\frac{dp}{df}=\frac{f^{\prime
\prime }}{f^{\prime }}$, so
\begin{equation*}
p^{\prime }-\frac{p}{2f}=\frac{p^3}{af},
\end{equation*}%
which is an ODE of Bernoulli type. The solution is%
$$f'(z)=\pm\frac{\sqrt{a}m\sqrt{f}}{\sqrt{1-2m^2f}}.$$
The solution $f(z)$ of this equation together the above function $g(y)$ provide an example of a $K^{1/4}$-translator given by $x=f(z)g(y)$.
\end{example}
\section{Ruled $K^\alpha$-translators}\label{sec4}

In this section we study the solutions of \eqref{k1} when the surface is ruled. A parametrization of a ruled surface $\Sigma$ is 
\begin{equation*}
X(s,t)=\gamma(s)+t w(s),\quad s\in I\subset\r, t\in\r,
\end{equation*}
where $w=w(s)$ is a smooth function with $\vert w(s)\vert=1$ for all $s\in I$ and  $\gamma$
is a curve parametrized by arc-length.

A  case to discard is that $\Sigma$ is a cylindrical surface because   $K=0$. Thus,  $w=w(s)$ is a non-constant function. In such a case, we can choose $\gamma$ to be the   striction curve, that is, $\langle \gamma'(s), w'(s) \rangle =0$ for all $s\in I$. Then 
\begin{equation*}
K=-\frac{\lambda^2}{(\lambda ^2+t^2)^2},\quad \lambda=\frac{(\gamma',w,w')}{\vert w'\vert^2},
\end{equation*}%
 where the parenthesis $(\cdot,\cdot,\cdot)$ denotes the determinant of the three vectors. In particular, we are assuming that $\lambda$ is not constantly $0$. On the other hand, the unit normal vector $N$ is 
\begin{equation*}
N=\frac{\lambda w'+tw'\times w}{\vert w'\vert \sqrt{%
\lambda^2+t^2}}.
\end{equation*}%
In particular, $K$ is negative, so we are assuming that $\alpha$ is an integer.  Then \eqref{k1} writes as 
\begin{equation*}
( -1) ^\alpha\frac{\lambda ^{2\alpha }}{(\lambda^2+t^2)^{2\alpha }}=\frac{1}{%
\vert w'\vert\sqrt{\lambda ^2+t^2}}(\lambda \langle w',\vec{v}%
\rangle +t(w',w,\vec{v})).
\end{equation*}%
or equivalently, 
\begin{equation}\label{eq-ruled}
-(-1)^{\alpha }\lambda ^{2\alpha }\vert w'\vert+\lambda \langle w',%
\vec{v}\rangle (\lambda ^2+t^2)^{\frac{4\alpha -1}{2}}+(w',w,\vec{v}%
)t(\lambda ^2+t^2)^{\frac{4\alpha -1}{2}}=0.
\end{equation}%
 The Wronskian of the functions $\left\{ 1,(\lambda
^2+t^2)^{\frac{4\alpha -1}{2}},t(\lambda ^2+t^2)^{\frac{4\alpha -1}{2}%
}\right\} $ is%
$$(4\alpha -1)(\lambda ^{2}+t^{2})^{4\alpha -3}\left( 4\alpha
t^{2}-\lambda ^{2}\right).$$
Since  $\alpha \neq 1/4$, the Wronskian if not $0$, proving these three functions are   linearly independent. This yields a contradiction with \eqref{eq-ruled}. As a conclusion, we have the following result.

\begin{theorem}\label{t-4} There are not $K^\alpha$-translators that are ruled surfaces.  
\end{theorem}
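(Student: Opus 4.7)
The plan is to exploit the fact that, at each fixed value of the base parameter $s$, equation \eqref{eq-ruled} is a functional identity in the ruling parameter $t$ in which the three $t$-profiles decouple cleanly. First I would set up the ruled parametrization $X(s,t)=\gamma(s)+tw(s)$ with $\gamma$ arc-length parametrized and $|w|\equiv 1$. The cylindrical case $w'\equiv 0$ gives $K\equiv 0$, contradicting the translator equation \eqref{k1} (since we are excluding $K$ constant), so I may assume $w'(s)\neq 0$ on an open set and take $\gamma$ to be the striction curve, so that $\langle\gamma',w'\rangle\equiv 0$. A direct computation then gives the formulas for $K$ and $N$ displayed in the excerpt, with $\lambda=(\gamma',w,w')/|w'|^2$, and reduces \eqref{k1} to the polynomial-type identity \eqref{eq-ruled}.

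Next I would observe that $K=-\lambda^2/(\lambda^2+t^2)^2\leq 0$ on the region where $\lambda\neq 0$, which forces $\alpha\in\mathbb{Z}$ in order for $K^\alpha$ to be defined; in particular $\alpha\neq 1/4$. I then fix a value $s_0$ at which $\lambda(s_0)\neq 0$ (possible since $\lambda$ is not identically zero), and regard \eqref{eq-ruled} as an identity in $t\in\mathbb{R}$. Writing it as
\begin{equation*}
c_0(s_0)\cdot 1+c_1(s_0)\cdot(\lambda^2+t^2)^{(4\alpha-1)/2}+c_2(s_0)\cdot t(\lambda^2+t^2)^{(4\alpha-1)/2}=0,
\end{equation*}
with $s$-dependent coefficients, I need the three functions of $t$ to be linearly independent.

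The crux is then the Wronskian computation already indicated in the excerpt,
\begin{equation*}
W=(4\alpha-1)(\lambda^2+t^2)^{4\alpha-3}\bigl(4\alpha t^2-\lambda^2\bigr).
\end{equation*}
Because $\alpha\in\mathbb{Z}\setminus\{0\}$ we have $4\alpha-1\neq 0$, and since $\lambda(s_0)\neq 0$ the second factor is not identically zero in $t$, so $W\not\equiv 0$. Consequently the three profiles are linearly independent on every open interval, and therefore $c_0(s_0)=c_1(s_0)=c_2(s_0)=0$. The vanishing of $c_0$ is precisely $(-1)^\alpha\lambda(s_0)^{2\alpha}|w'(s_0)|=0$, contradicting $\lambda(s_0)\neq 0$ and $w'(s_0)\neq 0$. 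This yields the theorem.

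I do not expect a serious obstacle: once the striction-curve normal form is in place and $\alpha$ is known to be a nonzero integer, the whole argument reduces to separating the $s$- and $t$-dependence via the Wronskian. The only delicate point is keeping track of the open set on which the parametrization is regular (i.e., $w'\neq 0$ and $\lambda\neq 0$) so that the Wronskian identity genuinely forces the coefficient $c_0$ to vanish at a point of the surface, rather than merely almost everywhere; but this is handled by choosing $s_0$ at a single point where both conditions hold.
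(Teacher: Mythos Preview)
Your proposal is correct and follows essentially the same route as the paper: the striction-curve normal form, the observation that $K<0$ forces $\alpha\in\mathbb{Z}$ (hence $\alpha\neq 1/4$), and the Wronskian argument showing that $\{1,(\lambda^2+t^2)^{(4\alpha-1)/2},t(\lambda^2+t^2)^{(4\alpha-1)/2}\}$ are linearly independent, forcing the constant coefficient $(-1)^\alpha\lambda^{2\alpha}|w'|$ in \eqref{eq-ruled} to vanish. Your explicit fixing of a point $s_0$ with $\lambda(s_0)\neq 0$ and $w'(s_0)\neq 0$ is a mild tidying of the paper's argument but not a substantive departure.
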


Let us notice that this result of non-existence has discarded the case that the surface is cylindrical (the rulings are parallel to $\vec{v}$) or that $\lambda=0$, that is, the surface is  developable. The condition $\lambda=0$ together $(w,w',\vec{v})=0$ is equivalent to the case that the base curve is a planar curve parallel to $\vec{v}$, obtaining that the surface is part of a plane.


\section*{Acknowledgments} This publication is part of the Project  I+D+i PID2020-117868GB-I00, supported by MCIN/ AEI/10.13039/501100011033/



\begin{thebibliography}{00}

\bibitem{an} B. Andrews, Contraction of convex hypersurfaces in Euclidean space. Calc. Var. Partial Differ. Eq. 2  (1994), 151--171.

\bibitem{an2} B. Andrews, Contraction of convex hypersurfaces by their affine normal. J. Differential Geom. 43  (1996), 207--230.

\bibitem{an3}  B. Andrews, Gauss curvature flow: the fate of the rolling stones. Invent. Math. 138  (1999), 151--161.

\bibitem{alo} M. E. Aydin, R. L\'opez, Rotational $K^\alpha$-Translators in Minkowski space. Preprint 2022.

 
\bibitem{caf} L. Caffarelli, Interior $W^{2,p}$ estimates for solutions of the Monge-Amp\`ere equation. Ann. of Math.  131   (1990), 135--150.
 

\bibitem{ca} E. Calabi, Hypersurfaces with maximal affinely invariant area. Amer. J. Math. 104   (1982), 91--126.

\bibitem{dd} M. P. do Carmo, M. Dajczer, Helicoidal surfaces with constant mean curvature. Tohoku Math. J. 34 (1982), 425--425.

\bibitem{ch} B. Chow, Deforming convex hypersurfaces by the nth root of the Gaussian curvature.  J. Differential Geom. 22   (1985). 117--138.

\bibitem{cdl}  K. Choi, P. Daskalopoulos, K.-A. Lee, Translating solutions to the Gauss curvature flow with flat sides, Analysis $\&$ PDE 14  (2021), 595--616.
 
\bibitem{da}	G.  Darboux,   Le\c{c}ons sur la Th\'eorie G\'en\'erale des Surfaces et ses Applications G\'eom\'etriques du Calcul Infinit\'esimal, vol. 1--4. Chelsea Publ. Co, reprint, 1972.

\bibitem{fi} W. J. Firey, Shapes of worn stones. Mathematika 21 (1974), 1--11. 

\bibitem{gt}   D. Gilbarg, N. Trudinger, Second Order Elliptic Partial Differential Equations. Second edition,  Springer-Verlag, 1983.


\bibitem{ju} H. Ju, J. Bao,  H. Jian, Existence for translating solutions of Gauss curvature flow on exterior domains. Nonlinear Anal. 75 (2012),  3629--3640.

\bibitem{le} H. Lee, Isometric deformations of the $K^{1/4}$-flow translators in $R^3$ with helicoidal symmetry. 
C. R. Math. Acad. Sci. Paris 351 (2013),  477--482. 
\bibitem{ts} K. Tso,  Deforming a hypersurface by its Gauss-Kronecker curvature. Comm. Pure Appl. Math. 38 (1985), 867--882.

\bibitem{ur} J. Urbas,    An expansion of convex hypersurfaces. J. Differential Geom. 33 (1991), 91--125.

\bibitem{ur2} J. Urbas, Complete noncompact self-similar solutions of Gauss curvature flows, I: Positive powers. Math. Ann.
311  (1998), 251--274.

\bibitem{ur3} J. Urbas, Complete noncompact self-similar solutions of Gauss curvature flows, II: Negative powers. Adv. Differ. Equ.
4(3)  (1999), 323--346.

\end{thebibliography}
\end{document}